\documentclass[a4paper,11pt]{article}
\usepackage{amsmath,amssymb,amsthm}
\usepackage{enumitem,bm}
\usepackage{color}
\usepackage[margin=30mm]{geometry}

\newcommand{\N}{\mathbb{N}}
\newcommand{\R}{\mathbb{R}}

\newcommand{\E}{\mathcal{E}}
\newcommand{\LL}{\mathcal{L}}
\newcommand{\M}{\mathcal{M}}
\newcommand{\J}{\mathcal{J}}
\newcommand{\dH}{\, d\Gamma}
\newcommand{\der}{\mathrm{D}}

\newcommand{\dx}{\, dx}
\newcommand{\dt}{\, dt}

\newcommand{\ds}{\, ds}
\newcommand{\dz}{\, dz}
\newcommand{\pd}{\partial}
\newcommand{\pdnu}{\pd_{\bm{\nu}}}
\newcommand{\bH}{\bm{H}}
\newcommand{\bV}{\bm{V}}
\newcommand{\bW}{\bm{W}}
\newcommand{\bZ}{\bm{Z}}

\newcommand{\abs}[1]{\left| #1 \right|}
\newcommand{\norm}[1]{\| #1 \|}

\newcommand{\inner}[2]{\langle #1 , #2 \rangle}

\newcommand{\eps}{\varepsilon}
\newcommand{\Lap}{\Delta}
\newcommand{\surf}{\nabla_{\Gamma}}
\newcommand{\LB}{\Delta_{\Gamma}}

\theoremstyle{plain}
\newtheorem{thm}{Theorem}[section]
\newtheorem{prop}[thm]{Proposition}
\newtheorem{lem}[thm]{Lemma}

\theoremstyle{plain}

\numberwithin{equation}{section}

\allowdisplaybreaks[4]

\begin{document}

\title{Convergence to equilibrium for a bulk--surface Allen--Cahn system coupled through a Robin boundary condition}

\author{Kei Fong Lam\footnotemark[1] \and Hao Wu\footnotemark[2]}

\date{ }

\maketitle

\renewcommand{\thefootnote}{\fnsymbol{footnote}}
\footnotetext[1]{Department of Mathematics, The Chinese University of Hong Kong, Shatin, N.T., Hong Kong ({\tt kflam@math.cuhk.edu.hk}).}
\footnotetext[2]{School of Mathematical Sciences and Shanghai Key Laboratory for Contemporary Applied Mathematics, Fudan University, Shanghai 200433, China;
Key Laboratory of Mathematics for Nonlinear Sciences (Fudan University), Ministry of Education, Shanghai 200433, China
({\tt haowufd@fudan.edu.cn}).}

\begin{abstract}
We consider a coupled bulk--surface Allen--Cahn system affixed with a Robin-type boundary condition between the bulk and surface variables. This system can also be viewed as a relaxation to a bulk--surface Allen--Cahn system with non-trivial transmission conditions. Assuming that the nonlinearities are real analytic, we prove the convergence of every global strong solution to a single equilibrium as time tends to infinity. Furthermore, we obtain an estimate on the rate of convergence. The proof relies on an extended {\L}ojasiewicz--Simon type inequality for the bulk--surface coupled system. Compared with previous works, new difficulties arise as in our system the surface variable is no longer the trace of the bulk variable, but now they are coupled through a nonlinear Robin boundary condition.
\end{abstract}

\noindent \textbf{Key words. } Allen--Cahn equation, bulk--surface interaction, global well-posedness, convergence to equilibrium, {\L}ojasiewicz--Simon inequality \\

\noindent \textbf{AMS subject classification. } 35B40, 35D35, 35K20, 35K61

\section{Introduction}
The coupling of bulk and surface (boundary) interactions can be found in various physical processes with boundary effects, for instance, the phase separation of binary mixtures with effective short-range interaction between the mixture and the solid wall \cite{K-etal}, the moving contact line problem \cite{Q1}, the heat conduction problem with a boundary heat source \cite{FGGR},
certain Markov process with diffusion, absorption and reflection on the boundary \cite{Ta}, and so on.
From the mathematical point of view, those nontrivial dynamics on the boundary that serve to account for the influences of the boundary to the bulk dynamics inside the domain are described by the so-call dynamic boundary condition, as besides the spatial derivatives it also involves temporal derivatives of the unknown variable (in some specific cases, its variant is also referred to as the Wentzell boundary condition). As an illustrating example, we recall a generic heat equation with dynamic boundary condition posed in a bounded domain $\Omega$ with boundary $\Gamma := \pd \Omega$ that reads as
\begin{align*}
\pd_t u = \Lap u + g_b\ \  \text{ in } \Omega, \quad \gamma \pd_t u = \sigma \LB u - \pdnu u - \kappa u + g_s\ \  \text{ on } \Gamma,
\end{align*}
where $\gamma$, $\sigma$, $\kappa$ denote non-negative constants, $g_b$ and $g_s$ are prescribed external heat sources located in the domain and on the boundary, $\pdnu u := \nabla u \cdot \nu$ is the normal derivative of $u$ on $\Gamma$ with unit outward normal $\nu$, and $\LB$ denotes the Laplace--Beltrami operator on $\Gamma$.
The above problem can be rewritten as the following bulk--surface system
\begin{align*}
&\pd_t u = \Lap u + g_b\ \  \text{ in } \Omega, \quad \text{with}\quad  u \vert_\Gamma = u_\Gamma \ \  \text{ on } \Gamma,\\
&\gamma \pd_t u_\Gamma = \sigma \LB u_\Gamma - \pdnu u - \kappa u_\Gamma + g_s\ \  \text{ on } \Gamma,
\end{align*}
 for the bulk variable $u$ defined in $\Omega$ as well as a surface variable $u_\Gamma$ defined on $\Gamma$. In particular, the above nonhomogeneous Dirichlet boundary condition for $u$ on $\Gamma$ turns out to be a transmission condition that connects the two variables by imposing that $u_\Gamma$ is the trace of $u$ on $\Gamma$.
 In this paper, we focus our interests on the coupling of bulk and surface interactions in certain phase separation process.
One typical model is the Allen--Cahn equation \cite{AC}, a second order semilinear parabolic equation, whose connection with the motion by mean curvature for free interfaces has been well-established by several authors \cite{Chen,DS,I}. Together with its fourth-order counterpart i.e., the Cahn--Hilliard equation \cite{CH}, these so-called phase field models have become essential components of more complex mathematical models for the evolution of multi-phase phenomena.

The bulk--surface coupled Allen--Cahn system that we are going to investigate reads as follows
\begin{subequations}\label{ACAC}
\begin{alignat}{2}
\pd_t u - \Lap u + f(u) = 0 &\quad  \text{ in } Q := \Omega \times (0,\infty), \label{u} \\
K \pdnu u + u = h(\phi) &\quad \text{ on } \Sigma := \Gamma \times (0,\infty), \label{Rob} \\
\pd_t \phi - \LB \phi + f_\Gamma(\phi) + h'(\phi) \pdnu u = 0 &\quad \text{ on } \Sigma, \label{phi} \\
u(0) = u_0 &\quad \text{ in } \Omega, \\
\phi(0) = \phi_0 &\quad \text{ on } \Gamma,
\end{alignat}
\end{subequations}
where $K>0$ is a positive constant.
The study of problem \eqref{ACAC} is partially motivated by the following system of equations and dynamic boundary conditions of Allen--Cahn type for bulk variable $u$ and surface variable $\phi$ in a recent work \cite{CFL}:
\begin{subequations}\label{lim:ACAC}
\begin{alignat}{2}
\pd_t u - \Lap u + \beta(u) + \pi(u) \ni g_b &\quad  \text{ in } Q_T := \Omega \times (0,T), \\
\pd_t \phi - \LB \phi + \beta_\Gamma(\phi) + \pi_\Gamma(\phi) + \alpha \pdnu u \ni g_s &\quad  \text{ on } \Sigma_T := \Gamma \times (0,T), \\
u|_\Gamma = \alpha \phi + \eta &\quad  \text{ on } \Sigma_T, \\
u(0) = u_0 &\quad  \text{ in } \Omega,\\
\phi(0) = \alpha^{-1}(u_0 \vert_\Gamma - \eta) &\quad  \text{ on } \Gamma,
\end{alignat}
\end{subequations}
where $T > 0$ is an arbitrary but fixed constant, $g_b$ and $g_s$ are prescribed external forces, $\beta$ and $\beta_\Gamma$ are maximal monotone and possibly non-smooth graphs, while $\pi$ and $\pi_\Gamma$ are some non-monotone Lipschitz perturbations. The system \eqref{lim:ACAC} generalize previous analyses in \cite{CC,CF,CGNS,GG} from the standard relation $u \vert_{\Gamma} = \phi$ in dynamic boundary conditions to an affine linear transmission condition $u \vert_{\Gamma} = \alpha \phi + \eta$ with $\alpha \neq 0$ and $\eta \in \R$.  The motivation for such a modification to the relation between the bulk and surface variables, as originally described in \cite{CFL} for the Cahn--Hilliard equation, is to study potential competitions between bulk and surface phase separations in the special case $\alpha = -1$ and $\eta = 0$. Aside from a direct approach with an abstract formulation \cite{CF} to establish the strong well-posedness of \eqref{lim:ACAC}, the authors in \cite{CFL} appealed to the so-called boundary penalty method \cite{B,BE}, which employs a Robin boundary condition as a relaxation to approximate the Dirichlet-type boundary condition $u \vert_{\Gamma} =  \alpha \phi + \eta:=h(\phi)$. A first analysis was performed for the regularized system
\begin{subequations}\label{Rob:ACAC}
\begin{alignat}{2}
\pd_t u_K - \Lap u_K + \beta(u_K) + \pi(u_K) \ni g_b &\quad \text{ in } Q_T, \\
\pd_t \phi_K - \LB \phi_K + \beta_\Gamma(\phi_K) + \pi_\Gamma(\phi_K) + h'(\phi_K) \pdnu u_K \ni g_s &\quad  \text{ on } \Sigma_T, \label{Rob:b1}\\
K \pdnu u_K + u_K = h(\phi_K) &\quad  \text{ on } \Sigma_T, \\
u_K(0) = u_{K,0} &\quad  \text{ in } \Omega,\\
\phi_K(0) = \phi_{K,0} &\quad  \text{ on } \Gamma,
\end{alignat}
\end{subequations}
where $K>0$. We note that the variables $u_K$ and $\phi_K$ are coupled only through the Robin-type boundary condition $K\pdnu u_K + u_K = h(\phi_K)$ and the term $h'(\phi_K) \pdnu u_K$ in the surface equation \eqref{Rob:b1}.
For (possibly nonlinear) relations $h \in C^2(\R)$ with $h' \in W^{1,\infty}(\R)$, via a two-level approximation the existence of strong solutions to \eqref{Rob:ACAC} on $[0,T]$ is shown in \cite{CFL}. Moreover, for the special case $h(s) = \alpha s + \eta$, there exists a positive constant $C$ independent of $K$ such that
\begin{align*}
\norm{u_K - u}_{\mathbb{X}_\Omega} + \norm{\phi_K - \phi}_{\mathbb{X}_{\Gamma}} + K^{-1/2}\norm{\alpha \phi_K + \eta - u_K}_{L^2(\Sigma_T)} \leq C K^{1/2} \norm{\pdnu u}_{L^2(\Sigma_T)},
\end{align*}
where $(u_K, \phi_K)$ is the unique strong solution to \eqref{Rob:ACAC} and $(u,\phi)$ is the unique strong solution \eqref{lim:ACAC}, with $\mathbb{X}_{\Omega} := L^{\infty}(0,T;L^2(\Omega)) \cap L^2(0,T;H^1(\Omega))$ and $\mathbb{X}_\Gamma:= L^{\infty}(0,T;L^2(\Gamma)) \cap L^2(0,T;H^1(\Gamma))$.
In particular, the transmission condition $u|_\Gamma = \alpha \phi + \eta$ can be attained from the Robin relaxation \eqref{Rob:ACAC} at a linear rate in $K$.
It is also worth mentioning that the Robin type relaxation boundary condition has its own interest and appears in many other contexts, see for instance, the weak anchoring boundary condition for a bulk nematic liquid crystal with an included oil droplet \cite{KL} as well as the coupled bulk--surface system for receptor-ligand dynamics in cell biology \cite{ERV}.

In this contribution, after establishing the global well-posedness to problem \eqref{Rob:ACAC} (see Theorem \ref{thm:Exist}), our aim is to study its long-time behaviour as $t\to +\infty$.
The first attempt was made in \cite{CFL} that the authors gave a characterization on its $\omega$-limit set.
However, the structure of the $\omega$-limit remains unclear. In particular, for any initial datum can the $\omega$-limit set be just a singleton?
This issue is nontrivial since the nonconvexity of bulk and surface potentials indicate that the set of steady states may have a rather complicated structure.
Now writing $f(u) := \beta(u) + \pi(u)$, $f_\Gamma(\phi) := \beta_\Gamma(\phi) + \pi_\Gamma(\phi)$, with antiderivatives $F(u)$ and $F_\Gamma(\phi)$ such that $F'(u) = f(u)$ and $F_\Gamma'(\phi) = f_\Gamma(\phi)$, for any $K>0$, we reformulate \eqref{Rob:ACAC} into our problem \eqref{ACAC} (with zero external forces $g_b=g_s=0$ for the sake of simplicity).
 Then at least formally, we can deduce that it exhibits a Lyapunov structure that serves as a starting point of the analysis of long-time behavior:
\begin{align}\label{Lyap}
& \frac{d}{dt} \Big ( \int_\Omega \frac{1}{2} \abs{\nabla u}^2 + F(u) \dx + \int_\Gamma \frac{1}{2} \abs{\surf \phi}^2 + F_\Gamma(\phi) + \frac{1}{2K} \abs{u - h(\phi)}^2 \dH \Big ) \nonumber \\
& \quad + \norm{\pd_t u}_{L^2(\Omega)}^2 + \norm{\pd_t \phi}_{L^2(\Gamma)}^2 = 0\quad \forall\, t>0,
\end{align}
where $\surf g$ denotes the surface gradient of certain function $g$ defined on $\Gamma$. More precisely, assuming that the nonlinearities $f$, $f_\Gamma$ and $h$ are real analytic, we prove that every global strong solution $(u,\phi)$ of problem \eqref{ACAC} will converge to a single equilibrium $(u_*,\phi_*)$ as $t\to+\infty$ and moreover, we obtain a polynomial decay of the solution (See Theorem \ref{thm:Eqm}). The proof is based on the {\L}ojasiewicz--Simon approach \cite{S83}, which turns out to be an efficient method in the study of long-time behaviour of evolution equations with energy dissipation structure, see e.g., \cite{CGGM,GG,GG09,HJ,J,LW,Wu,WuZheng} and the references cited therein. For the Allen--Cahn system \eqref{lim:ACAC} with Dirichlet transmission condition $u|_\Gamma=\phi$, zero forcing terms $g_b = g_s = 0$ and analytic functions $\beta$, $\pi$, $\beta_\Gamma$, $\pi_\Gamma$, the convergence of its global solution $(u(t), \phi(t))$ to a single equilibrium as $t \to +\infty$ has been addressed in \cite{SW} (see also \cite{Wu07b} when the surface diffusion term $\Delta_\Gamma \phi$ is neglected). With minor modifications, similar conclusion can be draw for the affine linear case $u \vert_{\Gamma} = \alpha \phi + \eta$. However, for our problem \eqref{ACAC}, in order to overcome mathematical difficulties due to the bulk--surface coupling structure as well as the nonlinear Robin relaxation boundary condition, we have to derive a new type of gradient inequality of {\L}ojasiewicz--Simon type to achieve our goal (see Theorem \ref{thm:LSa}). Besides, it seems that the solution regularities established in \cite{CFL} are not sufficient for the study of long-time behaviour, partly as the previous estimates therein are not uniform with respect to the fixed terminal time $T > 0$, and partly due to the non-smooth maximal monotone graphs $\beta$ and $\beta_\Gamma$. Hence, for problem \eqref{ACAC}, we first need to establish new uniform-in-time regularity estimates for global solutions with analytical nonlinearities $h$, $f$ and $f_\Gamma$.

We note that the Allen--Cahn equation serves as possibly the simplest example for phase field models.
It will be interesting to perform corresponding analysis on the widely studied Cahn--Hilliard equation. While there are numerous contributions for the analysis of the Cahn--Hilliard equation with dynamic boundary conditions, amongst which we list the works \cite{CFP,CFch,CGS,Gal,GK,GMS,LW,MZ,Motoda,RZ,WuZheng}, to the best of our knowledge, analysis  of the Cahn--Hilliard system with general transmission relation $u \vert_\Gamma = h(\phi)$ between the bulk variable $u$ and the surface variable $\phi$ has not received much attention, neither has the corresponding relaxation with the Robin boundary condition. These will be the topics of our future study.

The remaining part of this paper is organized as follows.
In Section \ref{sec:main}, we introduce the functional settings and state the main results of this paper.
In Section \ref{sec:Exist}, we derive new regularity estimates and prove the global well-posedness of strong solutions to problem \eqref{ACAC}.
In Section \ref{sec:LS}, we establish an extended {\L}ojasiewicz--Simon inequality for our system with bulk--surface coupling structure.
In Section \ref{sec:Long}, we prove the convergence to equilibrium along with an estimate for the convergence rate.

\section{Main Results} \label{sec:main}
Throughout this paper, for a (real) Banach space $X$ we denote by $\|\cdot\|_X$ its norm, by $X'$ its
dual space, and by $\langle\cdot,\cdot\rangle_{X',X}$ the dual pairing between $X'$ and $X$.
The standard Lebesgue and Sobolev spaces in $\Omega$ are denoted by $L^p(\Omega)$ and $W^{k,p}(\Omega)$ for $p \in [1,\infty]$ and $k \geq 0$.  Likewise, $L^p(\Gamma)$ and $W^{k,p}(\Gamma)$ denote the corresponding Lebesgue and Sobolev spaces on $\Gamma$. In the case $p = 2$, we use the notation $H^k(\Omega) = W^{k,2}(\Omega)$ and $H^k(\Gamma) = W^{k,2}(\Gamma)$.
Let $I$ be an interval of $\mathbb{R}^+$ and $X$ a Banach space, the function space $L^p(I;X)$, $1 \leq p \leq \infty$ consists of $p$-integrable
functions with values in $X$. Moreover, $C(I;X)$ denotes the topological vector space of all bounded and continuous functions from $I$ to $X$, while
$W^{1,p}(I,X)$ $(1\leq p\leq \infty)$ stands for the space of all functions $u$ such that $u, \frac{\mathrm{d}u}{\dt}\in L^p(I;X)$, where
$\frac{\mathrm{d}u}{\dt}$ denotes the vector valued distributional derivative of $u$. For product function spaces, we shall make use of the notations
$$\bH := L^2(\Omega) \times L^2(\Gamma),\quad \bV := H^1(\Omega) \times H^1(\Gamma)\quad \text{and}\quad \bW := H^2(\Omega) \times H^2(\Gamma)$$
with equivalent norms
\begin{align*}
\norm{(u,v)}_{\bZ}^2 = \begin{cases} \norm{u}_{L^2(\Omega)}^2 + \norm{v}_{L^2(\Gamma)}^2 & \text{ if } \bZ = \bH, \\
\norm{u}_{H^1(\Omega)}^2 + \norm{v}_{H^1(\Gamma)}^2 & \text{ if } \bZ = \bV, \\
\norm{u}_{H^2(\Omega)}^2 + \norm{v}_{H^2(\Gamma)}^2 & \text{ if } \bZ = \bW.
\end{cases}
\end{align*}
Throughout the paper, $C\geq 0$ will stand for a generic constant and $\mathcal{Q}(\cdot)$ for a generic positive monotone increasing function.
Special dependence will be pointed out in the text if necessary.

Next, let us state the assumptions we shall work with:
\begin{enumerate}[label=$(\mathrm{A \arabic*})$, ref = $\mathrm{A \arabic*}$]
\item \label{ass:dom} $\Omega \subset \R^3$ is a bounded domain with smooth boundary $\Gamma$.
\item \label{ass:h} The function $h$ is analytic with $h', h'' \in L^{\infty}(\R)$ and satisfies for some positive constant $c > 0$, $\abs{h'''(s)} \leq c(1 + \abs{s}^q)$ holding for all $s \in \R$ with some exponent $q \in [0,\infty)$.
\item \label{ass:F} $F$ and $F_\Gamma$ are analytic functions and there exist positive constants $c_0, \dots, c_4$ such that
\begin{align*}
\abs{f''(s)} \leq c_0(1 + \abs{s}^{p}), \quad \abs{f_\Gamma''(s)} \leq c_0(1 + \abs{s}^q) &\quad \text{ for all } s \in \R, \\
F(s) \geq c_1 \abs{s} - c_2 , \quad F_\Gamma(s) \geq c_1 \abs{s} - c_2 &\quad \text{ for all } \abs{s} > c_3, \\
f'(s) \geq - c_4, \quad f_\Gamma'(s) \geq - c_4 &\quad \text{ for all } s \in \R,
\end{align*}
with exponents $p \in [0,3)$ and $q \in [0,\infty)$.
\item \label{ass:ini} The initial data satisfy $(u_0, \phi_0) \in \bW$ with the compatibility condition $K \pdnu u_0 + u_0 = h(\phi_0)$ holding a.e.~on $\Gamma$.
\end{enumerate}

Now we state the main results of this paper.

\begin{thm}[Global well-posedness]\label{thm:Exist}
Suppose that \eqref{ass:dom}--\eqref{ass:F} are satisfied.
For any initial data $(u_0, \phi_0)$ satisfying \eqref{ass:ini}, problem \eqref{ACAC} admits a unique global strong solution $(u,\phi)$ such that
\begin{align*}
 (u,\phi) \in C([0,+\infty);\bW) \quad (\pd_t u,\pd_t \phi) \in L^\infty(0,+\infty; \bH) \cap L^2(0,+\infty;\bV).
\end{align*}
Moreover, for any $\delta>0$, we have
\begin{align*}
(u,\phi)\in L^\infty(\delta, +\infty; H^3(\Omega)\times H^3(\Gamma)),\quad (\pd_t u,\pd_t \phi) \in L^\infty(\delta,+\infty; \bV).
\end{align*}
\end{thm}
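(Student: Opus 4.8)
The plan is to build the solution by a Faedo--Galerkin approximation and then derive a hierarchy of \emph{uniform-in-time} a priori estimates; continuation along these bounds produces a global solution, and an elliptic bootstrap supplies the stated regularity. A device used throughout is that the Robin condition \eqref{Rob} eliminates the normal derivative, $\pdnu u = K^{-1}(h(\phi)-u|_\Gamma)$ on $\Sigma$, so that the coupling term in \eqref{phi} becomes the lower-order quantity $K^{-1}h'(\phi)(h(\phi)-u|_\Gamma)$ and \eqref{ACAC} is a semilinear bulk--surface system coupled only through traces of $u$. For local well-posedness I would project the reformulated system onto the spaces spanned by eigenfunctions of $-\Lap$ with a Robin condition on $\Omega$ and of $-\LB$ on $\Gamma$, solve the resulting ODE system, and pass to the limit using the estimates below and the Aubin--Lions lemma; uniqueness (in fact continuous dependence) follows by testing the equations for the difference $(\bar u,\bar\phi)$ of two solutions with $(\bar u,\bar\phi)$, using $f',f_\Gamma'\ge-c_4$ and $h',h''\in L^\infty(\R)$, controlling the sole delicate term $\int_\Gamma (h'(\phi_1)-h'(\phi_2))\,\pdnu u_2\,\bar\phi\dH$ via the two-dimensional interpolation $\norm{\bar\phi}_{L^4(\Gamma)}^2\le C\norm{\bar\phi}_{H^1(\Gamma)}\norm{\bar\phi}_{L^2(\Gamma)}$ and $\pdnu u_2\in L^\infty_{\mathrm{loc}}([0,\infty);L^2(\Gamma))$, and absorbing it into $\norm{\surf\bar\phi}_{L^2(\Gamma)}^2$ before applying Gronwall.

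The first a priori estimate is obtained by testing \eqref{u} by $\pd_t u$ and \eqref{phi} by $\pd_t\phi$ and converting the boundary terms via \eqref{Rob}, which reproduces the dissipation equality \eqref{Lyap}; combined with $F,F_\Gamma\ge c_1\abs{\cdot}-c_2$ and a Poincar\'e inequality this gives the uniform bounds $(u,\phi)\in L^\infty(0,+\infty;\bV)$, $u-h(\phi)\in L^\infty(0,+\infty;L^2(\Gamma))$ and $(\pd_t u,\pd_t\phi)\in L^2(0,+\infty;\bH)$. For the next level I would differentiate the reformulated equations in time and test by $(\pd_t u,\pd_t\phi)$: the boundary and coupling contributions reorganize into $K^{-1}\norm{\pd_t u - h'(\phi)\pd_t\phi}_{L^2(\Gamma)}^2\ge 0$ plus a remainder $\int_\Gamma h''(\phi)\abs{\pd_t\phi}^2\pdnu u\dH$, which is absorbed using $h''\in L^\infty$, $\pdnu u\in L^\infty(0,+\infty;L^2(\Gamma))$ and $H^1(\Gamma)\hookrightarrow L^s(\Gamma)$ for every finite $s$, while $\int_\Omega f'(u)\abs{\pd_t u}^2$ and $\int_\Gamma f_\Gamma'(\phi)\abs{\pd_t\phi}^2$ are bounded from below by $f',f_\Gamma'\ge-c_4$. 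Since $(u_0,\phi_0)\in\bW\hookrightarrow L^\infty(\Omega)\times L^\infty(\Gamma)$ and the compatibility condition in \eqref{ass:ini} holds, evaluating \eqref{u}--\eqref{phi} at $t=0$ gives $(\pd_t u(0),\pd_t\phi(0))\in\bH$; a uniform Gronwall argument --- which is available precisely because $\int_t^{t+1}(\norm{\pd_t u}_{L^2(\Omega)}^2+\norm{\pd_t\phi}_{L^2(\Gamma)}^2)\le C$ by the first estimate --- then yields $(\pd_t u,\pd_t\phi)\in L^\infty(0,+\infty;\bH)\cap L^2(0,+\infty;\bV)$.

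Next comes the elliptic bootstrap. Reading \eqref{u} as the Robin problem $-\Lap u=-\pd_t u-f(u)$ with datum $h(\phi)\in L^\infty(0,+\infty;H^1(\Gamma))$: since $\abs{f(s)}\le C(1+\abs{s}^{p+1})$ with $p<3$ and $u\in L^\infty(0,+\infty;L^6(\Omega))$, we have $f(u)\in L^\infty(0,+\infty;L^r(\Omega))$ with $r=6/(p+1)>3/2$, so $L^r$ elliptic regularity together with the embedding $W^{2,r}(\Omega)\hookrightarrow L^\infty(\Omega)$ (valid in $\R^3$ when $r>3/2$) gives $u\in L^\infty(0,+\infty;L^\infty(\Omega))$, hence $f(u)\in L^\infty(0,+\infty;L^\infty(\Omega))$ and, re-applying $L^2$ regularity, $u\in L^\infty(0,+\infty;H^2(\Omega))$; consequently $\pdnu u\in L^\infty(0,+\infty;H^{1/2}(\Gamma))$, so reading \eqref{phi} as $-\LB\phi=-\pd_t\phi-f_\Gamma(\phi)-h'(\phi)\pdnu u$ with right-hand side in $L^\infty(0,+\infty;L^2(\Gamma))$ (the embedding $H^1(\Gamma)\hookrightarrow L^s(\Gamma)$ absorbing $f_\Gamma(\phi)$ for any finite $q$) yields $\phi\in L^\infty(0,+\infty;H^2(\Gamma))$. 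Continuity $(u,\phi)\in C([0,+\infty);\bW)$ then follows by combining these bounds with the time regularity above, parabolic interpolation and Aubin--Lions, and the equations, and since every estimate is uniform in time the local solution extends to a global one. For the regularity away from $t=0$, I would differentiate the equations once more in time and test by $(\pd_t^2 u,\pd_t^2\phi)$ (or work with $t$-weighted quantities): the cubic terms that arise, such as $\int_\Omega f''(u)(\pd_t u)^3$, $\int_\Gamma f_\Gamma''(\phi)(\pd_t\phi)^3$ and $\int_\Gamma h'''(\phi)(\pd_t\phi)^3\pdnu u$, are controlled by the growth bounds in \eqref{ass:h}--\eqref{ass:F} together with the $L^\infty$-bounds on $u$ and $\phi$ (here $H^2(\Gamma)\hookrightarrow L^\infty(\Gamma)$) and suitable interpolation, and a uniform Gronwall lemma --- admissible since $(\pd_t u,\pd_t\phi)\in L^2(0,+\infty;\bV)$ by the previous step --- gives $(\pd_t u,\pd_t\phi)\in L^\infty(\delta,+\infty;\bV)$ for every $\delta>0$. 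Feeding this back into the bootstrap (the Robin right-hand side now lies in $L^\infty(\delta,+\infty;H^1(\Omega))$ and the datum $h(\phi)$ in $L^\infty(\delta,+\infty;H^2(\Gamma))\hookrightarrow H^{3/2}(\Gamma)$, using $h',h''\in L^\infty$) produces $u\in L^\infty(\delta,+\infty;H^3(\Omega))$, so $\pdnu u\in L^\infty(\delta,+\infty;H^{3/2}(\Gamma))$ and the surface elliptic problem gives $\phi\in L^\infty(\delta,+\infty;H^3(\Gamma))$.

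The main obstacle is closing the differentiated estimates: unlike the Dirichlet coupling $u|_\Gamma=\phi$, here $\pdnu u$ enters the surface equation explicitly and is related to $(u,\phi)$ only through \eqref{Rob}, so everything hinges on the algebraic cancellation producing the nonnegative term $\norm{\pd_t u - h'(\phi)\pd_t\phi}_{L^2(\Gamma)}^2$ and on absorbing the cross term $\int_\Gamma h''(\phi)\abs{\pd_t\phi}^2\pdnu u\dH$ (and its higher-order analogues) with constants independent of $t$. A second delicate point is the bulk bootstrap, where the hypothesis $p<3$ in \eqref{ass:F} is exactly what forces $f(u)\in L^r$ with $r>3/2$ and hence $u\in L^\infty$ in three space dimensions.
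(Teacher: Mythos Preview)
Your proposal is correct and follows essentially the same programme as the paper: energy identity, differentiated-in-time estimates, elliptic bootstrap to $\bW$, a smoothing step to reach $\bV$-regularity of the time derivatives for $t\ge\delta$, and a final bootstrap to $H^3\times H^3$. Two minor technical choices differ from the paper and are worth flagging. First, in your second estimate you combine the bulk and surface tests and observe that the Robin coupling reorganises into the nonnegative square $K^{-1}\norm{\pd_t u-h'(\phi)\pd_t\phi}_{L^2(\Gamma)}^2$; the paper instead treats the two equations separately and closes via Young's inequality, which is equivalent but less transparent. Second, for the bulk $H^2$-bound you pass through $L^r$ elliptic regularity with $r=6/(p+1)>3/2$ and the embedding $W^{2,r}(\Omega)\hookrightarrow L^\infty(\Omega)$, while the paper uses the Gagliardo--Nirenberg inequality $\norm{u}_{L^{2p+4}}\le C\norm{u}_{H^2}^\alpha\norm{u}_{L^6}^{1-\alpha}$ to get $\norm{f(u)}_{L^2}^2\le C(1+\norm{u}_{H^2}^{p-1})$ and absorbs; both routes exploit $p<3$ in exactly the same place. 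Your description of the smoothing step is slightly imprecise (the specific cubic terms $\int_\Omega f''(u)(\pd_t u)^3$ etc.\ do not arise from ``differentiate once and test by $\pd_t^2$''), but the $t$-weighted alternative you mention is precisely what the paper does: it tests the once-differentiated equations by $t\pd_{tt}\phi$ and $t\pd_{tt}u$, handling the troublesome boundary term $\int_0^s t\int_\Gamma h'(\phi)\pd_t\phi\,\pd_{tt}u\,\dH\dt$ by an integration by parts in time that shifts the second derivative onto $\phi$, which has already been estimated.
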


\begin{thm}[Long-time behaviour]\label{thm:Eqm}
Under assumptions \eqref{ass:dom}--\eqref{ass:F}, for any initial data $(u_0, \phi_0) \in \bW$ satisfying \eqref{ass:ini}, the unique global strong solution to problem \eqref{ACAC} converges to an equilibrium $(u_*, \phi_*) \in \bW$, which is a strong solution to the stationary problem
\begin{subequations}\label{stat1}
\begin{alignat}{3}
-\Lap u_* + f(u_*) = 0 &\quad \text{ in } \Omega, \label{stat:u1} \\
K \pdnu u_* + u_* = h(\phi_*) &\quad \text{ on } \Gamma, \label{stat:Rob1} \\
- \LB \phi_* + f_\Gamma(\phi_*) + h'(\phi_*) \pdnu u_* = 0 &\quad \text{ on } \Gamma, \label{stat:phi1}
\end{alignat}
\end{subequations}
such that
\begin{align}\label{Rate}
\norm{(u(t), \phi(t)) - (u_*, \phi_*)}_{\bW} + \norm{(\pd_t u(t), \pd_t \phi(t))}_{\bH} \leq C (1+t)^{\frac{-\theta}{1-2\theta}},
\end{align}
where $\theta \in (0,\frac{1}{2})$ is a constant depending on $(u_*, \phi_*)$ and $C$ is a constant depending on $\|(u_0, \phi_0)\|_{\bW}$, $\Omega$, $\Gamma$ and coefficients of the system, but is independent of $t$.
\end{thm}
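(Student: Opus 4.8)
The plan is to run the {\L}ojasiewicz--Simon scheme on the Lyapunov functional
\begin{align*}
\E(u,\phi) := \int_\Omega \frac{1}{2}\abs{\nabla u}^2 + F(u) \dx + \int_\Gamma \frac{1}{2}\abs{\surf \phi}^2 + F_\Gamma(\phi) + \frac{1}{2K}\abs{u - h(\phi)}^2 \dH
\end{align*}
appearing in \eqref{Lyap}. First I would collect the preliminary structural facts. By Theorem \ref{thm:Exist} the orbit $\{(u(t),\phi(t)) : t\geq 1\}$ is bounded in $H^3(\Omega)\times H^3(\Gamma)$, hence relatively compact in $\bW$, and together with $(u,\phi)\in C([0,\infty);\bW)$ this makes the $\omega$-limit set $\omega(u_0,\phi_0) := \{(v,\psi)\in\bW : \exists\, t_n\to\infty,\ (u(t_n),\phi(t_n))\to(v,\psi) \text{ in } \bW\}$ nonempty, compact and connected. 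From \eqref{Lyap}, $t\mapsto\E(u(t),\phi(t))$ is nonincreasing and bounded below, so it converges to some $\E_\infty$, and $\int_0^\infty(\norm{\pd_t u}_{L^2(\Omega)}^2 + \norm{\pd_t \phi}_{L^2(\Gamma)}^2)\dt<\infty$. Invoking the uniform-in-time regularity of Theorem \ref{thm:Exist} (to upgrade this to $\norm{(\pd_t u,\pd_t \phi)(t)}_{\bH}\to 0$ and to pass to limits in \eqref{ACAC}), every element of $\omega(u_0,\phi_0)$ turns out to be a strong solution of the stationary problem \eqref{stat1}, and by continuity of $\E$ on $\bW$ one has $\E\equiv\E_\infty$ on $\omega(u_0,\phi_0)$.

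Fix any $(u_*,\phi_*)\in\omega(u_0,\phi_0)$, so that $(u_*,\phi_*)$ solves \eqref{stat1} and $\E(u_*,\phi_*)=\E_\infty$. Theorem \ref{thm:LSa} then provides $\theta\in(0,\tfrac{1}{2})$, $C_L>0$ and a neighbourhood $\mathcal{B}$ of $(u_*,\phi_*)$ on which, for pairs satisfying the Robin condition \eqref{Rob}, $\abs{\E(u,\phi)-\E_\infty}^{1-\theta}\leq C_L\norm{\E'(u,\phi)}_{\bV'}$. The crucial point is that, along the flow, integrating by parts and using \eqref{Rob} to cancel the boundary contributions shows that $\E'(u(t),\phi(t))$ is represented by $(-\pd_t u(t),-\pd_t\phi(t))$ acting on $\bV$, whence $\norm{\E'(u(t),\phi(t))}_{\bV'}\leq C\norm{(\pd_t u,\pd_t\phi)(t)}_{\bH}$. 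If $\E(u(t),\phi(t))=\E_\infty$ at some finite $t$ then by \eqref{Lyap} the solution is already stationary and there is nothing left to prove; otherwise set $G(t):=\E(u(t),\phi(t))-\E_\infty>0$, and the above combine with \eqref{Lyap} to give, for all $t$ with $(u(t),\phi(t))\in\mathcal{B}$,
\begin{align*}
-\frac{d}{dt}\, G(t)^{\theta} = \theta\, G(t)^{\theta-1}\big(\norm{\pd_t u}_{L^2(\Omega)}^2 + \norm{\pd_t \phi}_{L^2(\Gamma)}^2\big) \geq c\, \norm{(\pd_t u,\pd_t \phi)(t)}_{\bH}.
\end{align*}
Now a standard bootstrap applies: choosing $t_0$ so large that $(u(t_0),\phi(t_0))$ is $\bW$-close to $(u_*,\phi_*)$ and $G(t_0)$ is small, if the orbit first left $\mathcal{B}$ at some $\tau>t_0$ then integrating the last inequality over $[t_0,\tau]$ bounds $\norm{(u(\tau),\phi(\tau))-(u(t_0),\phi(t_0))}_{\bH}$ by $c^{-1}G(t_0)^{\theta}$, and interpolating $\bW=H^2\times H^2$ between $\bH$ and the uniform $H^3\times H^3$ bound converts this into a small $\bW$-displacement, a contradiction. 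Hence $(u(t),\phi(t))\in\mathcal{B}$ for all $t\geq t_0$, and integrating over $[t_0,\infty)$ gives $\int_{t_0}^\infty\norm{(\pd_t u,\pd_t\phi)}_{\bH}\dt<\infty$; thus $(u(t),\phi(t))$ is Cauchy in $\bH$, and by precompactness in $\bW$ it converges to $(u_*,\phi_*)$ in $\bW$ (so in fact $\omega(u_0,\phi_0)=\{(u_*,\phi_*)\}$).

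To obtain the rate \eqref{Rate}, combining \eqref{Lyap}, the flow representation of $\E'$, and Theorem \ref{thm:LSa} gives $G'(t)=-\norm{(\pd_t u,\pd_t\phi)(t)}_{\bH}^2\leq -c\,G(t)^{2(1-\theta)}$ for $t\geq t_0$; since $2(1-\theta)>1$, integrating this differential inequality yields $G(t)\leq C(1+t)^{-1/(1-2\theta)}$, and hence $\norm{(u(t),\phi(t))-(u_*,\phi_*)}_{\bH}\leq\int_t^\infty\norm{(\pd_t u,\pd_t\phi)}_{\bH}\dt\leq c^{-1}G(t)^{\theta}\leq C(1+t)^{-\theta/(1-2\theta)}$. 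A uniform Gronwall estimate applied to the time-differentiated system (legitimate thanks to the higher-order regularity in Theorem \ref{thm:Exist}) gives $\norm{(\pd_t u,\pd_t\phi)(t)}_{\bH}^2\leq C\int_{t-1}^t\norm{(\pd_t u,\pd_t\phi)}_{\bH}^2\dt = C\,(G(t-1)-G(t))\leq C(1+t)^{-1/(1-2\theta)}$, which is $\leq C(1+t)^{-\theta/(1-2\theta)}$ since $\theta<\tfrac{1}{2}$. Finally, to upgrade the solution decay from $\bH$ to $\bW$, I would subtract \eqref{stat1} from \eqref{ACAC} and apply elliptic regularity for the Robin problem associated with $-\Lap + (K\pdnu + 1)$ in $\Omega$ and for $-\LB$ on $\Gamma$ (using that $f,f_\Gamma,h$ are locally Lipschitz on the bounded range of the solution, that $\pdnu u_*\in L^\infty(\Gamma)$, and interpolation to absorb the cross terms), obtaining $\norm{(u(t),\phi(t))-(u_*,\phi_*)}_{\bW}\leq C(\norm{(\pd_t u,\pd_t\phi)(t)}_{\bH}+\norm{(u(t),\phi(t))-(u_*,\phi_*)}_{\bH})$, which together with the previous bounds is exactly \eqref{Rate}.

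The main obstacle is not the {\L}ojasiewicz--Simon inequality itself --- that is precisely Theorem \ref{thm:LSa} --- but the two places where the bulk--surface coupling re-enters: identifying the energy gradient $\E'$ along the flow with $(-\pd_t u,-\pd_t\phi)$ hinges on using the nonlinear Robin condition \eqref{Rob} to kill the boundary terms, and the upgrade of the decay from $\bH$ to the $\bW$-norm requires a coupled elliptic bootstrap between the bulk equation in $\Omega$ and the surface equation on $\Gamma$. Together with the rigorous justification of the time-differentiated estimates, these are where the genuine work lies; the remaining steps are the by-now-classical gradient-flow arguments.
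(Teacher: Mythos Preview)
Your argument is correct and follows the same {\L}ojasiewicz--Simon architecture as the paper: compactness of the orbit, identification of $\omega$-limit points as stationary solutions, the trapping argument in a neighbourhood of $(u_*,\phi_*)$ via the inequality $-\frac{d}{dt}G(t)^\theta \geq c\norm{(\pd_t u,\pd_t\phi)}_{\bH}$, and the energy differential inequality $G'\leq -cG^{2(1-\theta)}$ for the decay of $G(t)$. The identification $M(u(t),\phi(t))=(-\pd_t u,-\pd_t\phi)$ you describe is exactly how the paper links Theorem~\ref{thm:LSa} to the flow.

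The one genuine methodological difference lies in how the rate is lifted from $\bH$ to $\bW$. The paper inserts an explicit intermediate $\bV$-step: it tests the difference system with both $(\hat u,\hat\phi)$ and $(\pd_t\hat u,\pd_t\hat\phi)$, forms an auxiliary quantity $y(t)$ comparable to $\tfrac12\norm{(\hat u,\hat\phi)}_{\bV}^2$ up to $\bH$-errors, couples it with the time-differentiated $\bH$-estimate into $Y(t)=\eta\norm{(\pd_t\hat u,\pd_t\hat\phi)}_{\bH}^2+y(t)$, and derives a closed differential inequality $Y'+\gamma Y\leq C(1+t)^{-2\theta/(1-2\theta)}$; only then does it invoke elliptic regularity to reach $\bW$. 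You instead bypass the $\bV$-step entirely: a uniform Gronwall argument on the time-differentiated system gives the decay of $\norm{(\pd_t u,\pd_t\phi)}_{\bH}$ directly from that of $G$ (in fact with the sharper exponent $\tfrac{1}{2(1-2\theta)}$), and then elliptic regularity plus interpolation between $\bH$ and $\bW$ absorbs the $H^{1/2}(\Gamma)$ boundary term $\hat h$ and yields $\norm{(\hat u,\hat\phi)}_{\bW}\leq C(\norm{(\pd_t u,\pd_t\phi)}_{\bH}+\norm{(\hat u,\hat\phi)}_{\bH})$. Your route is slightly more economical here; the paper's has the advantage of delivering the $\bV$-rate and the time-derivative rate simultaneously from a single ODE, without appealing to interpolation or the uniform Gronwall lemma.
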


\section{Global Well-posedness} \label{sec:Exist}

In this section, we prove Theorem \ref{thm:Exist}. In view of \cite{CFL}, it remains to derive a series of uniform-in-time estimates that can be justified rigorously with the help of a Faedo--Galerkin approximation (using, in particular, the smoothness of the Galerkin coefficients with respect to the time variable due to the analytic nonlinearities to differentiate the equations in time, see \cite[\S 5 and 6]{CFL} for more details).

\subsection{A priori estimates}

\textbf{First estimate}. Testing \eqref{u} with $\pd_t u$, \eqref{phi} with $\pd_t \phi$, and using \eqref{Rob} yields the energy identity
\begin{align}\label{E1}
E(u(t), \phi(t)) + \int_0^t \norm{\pd_t u}_{L^2(\Omega)}^2 + \norm{\pd_t \phi}_{L^2(\Gamma)}^2 \ds = E(u_0, \phi_0) =: \E_0, \quad \forall\, t\geq 0
\end{align}
where
\begin{align}\label{energy}
E(u, \phi) = \int_\Omega \frac{1}{2} \abs{\nabla u}^2 + F(u) \dx + \int_\Gamma \frac{1}{2} \abs{\surf \phi}^2 + F_\Gamma(\phi) + \frac{1}{2K} \abs{u - h(\phi)}^2 \dH.
\end{align}
It is clear from the continuous embedding $\bW \subset L^{\infty}(\Omega) \times L^{\infty}(\Gamma)$ and assumptions \eqref{ass:F}--\eqref{ass:ini} that $\E_0$ is finite.
This yields
\begin{equation}\label{Est1}
\begin{aligned}
& \sup_{t\geq 0} E(u(t), \phi(t)) + \norm{\pd_t u}_{L^2(Q)}^2  + \norm{\pd_t \phi}_{L^2(\Sigma)}^2 \leq \E_0.
\end{aligned}
\end{equation}
Then by the assumption \eqref{ass:F}, it holds that
\begin{align}
\norm{u(t)}_{L^1(\Omega)} & \leq \frac{1}{c_1} \int_{\{x\in \Omega:\, |u(x,t)|>c_3\}} F(u(t)) \dx  + \frac{c_2}{c_1}|\Omega| + c_3 \abs{\Omega} \nonumber\\
& \leq \frac{1}{c_1} \int_{\Omega} F(u(t)) \dx +\left| \frac{1}{c_1} \int_{\{x\in\Omega:\, |u(x,t)|\leq c_3\}} F(u(t)) \dx\right|  + C \abs{\Omega} \nonumber\\
& \leq C(\E_0, \Omega, c_0, c_1, c_2, c_3,p) \quad \forall\, t\geq 0. \label{Est1a}
\end{align}
Then, we obtain with the help of the Gagliardo--Nirenberg inequality and \eqref{Est1}--\eqref{Est1a} that
\begin{align*}
\norm{u(t)}_{L^2(\Omega)} & \leq  C\norm{u(t)}_{L^1(\Omega)}^{\frac{2}{5}} \norm{\nabla u(t)}_{L^2(\Omega)}^{\frac{3}{5}} + C \norm{u(t)}_{L^1(\Omega)}\leq C, \quad \forall\, t\geq 0.
\end{align*}
By a similar argument, we have $\norm{\phi(t)}_{L^2(\Gamma)} \leq C$ for $t\geq 0$. Hence, combining with \eqref{Est1}, we get
\begin{align}\label{Est2}
\sup_{t\geq 0} \Big ( \norm{u(t)}_{H^1(\Omega)} + \norm{\phi(t)}_{H^1(\Gamma)} \Big ) \leq C.
\end{align}

\textbf{Second estimate}. As in \cite{CFL}, setting $\pd_t u(0) := \Lap u_0 - f(u_0)$ so that by assumptions \eqref{ass:F} and \eqref{ass:ini} we have $\pd_t u(0) \in L^2(\Omega)$.
Taking the time derivative of \eqref{u} and testing with $\pd_t u$, followed by integrating in time, gives rise to
\begin{align*}
& \frac{1}{2} \norm{\pd_t u(t)}_{L^2(\Omega)}^2 + \int_0^t \norm{\nabla \pd_t u}_{L^2(\Omega)}^2   + \frac{1}{K} \norm{\pd_t u}_{L^2(\Gamma)}^2 \ds \\
& \quad = \frac{1}{2} \norm{\pd_t u(0)}_{L^2(\Omega)}^2 + \int_0^t \Big ( \frac{1}{K} \int_\Gamma h'(\phi) \pd_t \phi \pd_t u \dH - \int_\Omega f'(u) \abs{\pd_t u}^2  \dx \Big ) \ds \\
& \quad \leq \frac{1}{2} \norm{\pd_t u(0)}_{L^2(\Omega)}^2 + \int_0^t \frac{1}{2K} \norm{\pd_t u}_{L^2(\Gamma)}^2 + \frac{\norm{h'}_{L^{\infty}(\R)}}{2K} \norm{\pd_t \phi}_{L^2(\Gamma)}^2 + c_4 \norm{\pd_t u}_{L^2(\Omega)}^2 \ds,
\end{align*}
where we have employed \eqref{ass:h} and \eqref{ass:F}.  In light of the previous estimate \eqref{Est1} we infer
\begin{align}\label{Est3}
\sup_{t\geq 0} \norm{\pd_t u(t)}_{L^2(\Omega)}^2 + \int_0^{+\infty} \norm{\pd_t u(t)}_{H^1(\Omega)}^2 \dt \leq C(\norm{u_0}_{H^2(\Omega)}, \E_0).
\end{align}
In a similar fashion, we set $\pd_t \phi(0) := \LB \phi_0 - f_\Gamma(\phi_0) - h'(\phi_0) \pdnu u_0$ so that $\pd_t \phi(0) \in L^2(\Gamma)$ holds by \eqref{ass:h}--\eqref{ass:ini}.  Then, taking the time derivative of \eqref{phi} and testing the resulting equation with $\pd_t \phi$ yields
\begin{align}
& \frac{1}{2} \norm{\pd_t \phi(t)}_{L^2(\Gamma)}^2 + \int_0^t \norm{\surf \pd_t \phi}_{L^2(\Gamma)}^2 + \frac{1}{K} \norm{\pd_t h(\phi)}_{L^2(\Gamma)}^2 \ds\nonumber \\
& \quad = \frac{1}{2} \norm{\pd_t \phi(0)}_{L^2(\Gamma)}^2 - \int_0^t \int_\Gamma \left(f_\Gamma'(\phi) \abs{\pd_t \phi}^2 + h''(\phi) \abs{\pd_t \phi}^2 \pdnu u - \frac{1}{K} \pd_t h(\phi) \pd_t u\right) \dH \ds\nonumber  \\
& \quad \leq \frac{1}{2} \norm{\pd_t \phi(0)}_{L^2(\Gamma)}^2 + \int_0^t c_4 \norm{\pd_t \phi}_{L^2(\Gamma)}^2 + \norm{h''}_{L^{\infty}(\R)} \norm{\pd_t \phi}_{L^4(\Gamma)}^2 \norm{\pdnu u}_{L^2(\Gamma)} \ds\nonumber \\
& \qquad + \int_0^t \frac{1}{2K} \norm{\pd_t h(\phi)}_{L^{2}(\Gamma)}^2 + \frac{1}{2K} \norm{\pd_t u}_{L^2(\Gamma)}^2\ds. \label{Est3a}
\end{align}
By the Gagliardo--Nirenburg inequality in two dimensions
\begin{align}\label{GN}
\norm{g}_{L^4(\Gamma)}^2 \leq C\norm{g}_{L^2(\Gamma)} \norm{\surf g}_{L^2(\Gamma)} + C\norm{g}_{L^2(\Gamma)}^2,
\end{align}
and on account of the boundedness of $\pdnu u = K^{-1}(h(\phi) - u)$ in $L^{\infty}(0,+\infty;L^2(\Gamma))$ from \eqref{Est1} and \eqref{ass:h}, we find that
\begin{align*}
\int_0^t  \norm{h''}_{L^{\infty}(\R)} \norm{\pd_t \phi}_{L^4(\Gamma)}^2 \norm{\pdnu u}_{L^2(\Gamma)} \ds \leq \int_0^t \frac{1}{2} \norm{\surf \pd_t \phi}_{L^2(\Gamma)}^2 + C \norm{\pd_t \phi}_{L^2(\Gamma)}^2 \ds,
\end{align*}
and so, combining with \eqref{Est1}, \eqref{Est3}, we infer from \eqref{Est3a} that
\begin{align}\label{Est4}
&\sup_{t\geq 0} \norm{\pd_t \phi(t)}_{L^2(\Gamma)}^2 + \int_0^{+\infty} \norm{\pd_t \phi(t)}_{H^1(\Gamma)}^2 +\norm{\pd_t h(\phi(t))}_{L^2(\Gamma)}^2 \dt\nonumber\\
 &\quad \leq C(\norm{(u_0, \phi_0)}_{\bW}, \E_0).
\end{align}
As a consequence of \eqref{Rob}, \eqref{Est3} and \eqref{Est4}, we can further deduce that
\begin{align}\label{Est5}
\int_0^{+\infty}\norm{\pd_t \pdnu u(t)}_{L^2(\Gamma)}^2 \dt \leq C.
 \end{align}

\textbf{Third estimate}.  From \eqref{Est1}, \eqref{Est2}, \eqref{Est4}, \eqref{ass:h} and \eqref{ass:F}, we claim that
\begin{align*}
\sup_{t\geq 0} \norm{\pd_t \phi(t) + f_\Gamma(\phi(t)) + h'(\phi(t)) K^{-1} (h(\phi(t)) - u(t))}_{L^2(\Gamma)} \leq C.
\end{align*}
Indeed, the assertion for $\pd_t \phi$ comes from \eqref{Est4}, while using \eqref{ass:h}, \eqref{ass:F} and the Sobolev embedding $H^1(\Gamma) \subset L^r(\Gamma)$ for any $r \in [1, +\infty)$,
\begin{align*}
\norm{f_\Gamma(\phi)}_{L^2(\Gamma)}^2 & \leq C(1 + \norm{\phi}_{H^1(\Gamma)}^{2(q+1)}), \\
\norm{h'(\phi)(h(\phi) - u)}_{L^2(\Gamma)}^2 & \leq (1 + \norm{\phi}_{L^2(\Gamma)}^2 + \norm{u}_{H^1(\Omega)}^2).
\end{align*}
Hence, applying regularity theory to \eqref{phi} viewed as an elliptic equation for $\phi$ leads to
\begin{align*}
\norm{\phi(t)}_{H^2(\Gamma)}\leq C\Big ( \norm{\pd_t \phi + f_\Gamma(\phi) + h'(\phi) K^{-1} (h(\phi) - u)}_{L^2(\Gamma)} + \norm{\phi}_{L^2(\Gamma)} \Big),
\end{align*}
and thus we arrive at
\begin{align}\label{Est6}
\sup_{t\geq 0} \norm{\phi(t)}_{H^2(\Gamma)} \leq C.
\end{align}
Meanwhile, for the bulk variable $u$, we aim to apply a similar argument to the elliptic problem \eqref{u}--\eqref{Rob}.
By the Lipschitz continuity of $h$ and \eqref{Est1}, we first note that $h(\phi) \in L^{\infty}(0,+\infty;H^1(\Gamma))$, and so together with $u \in L^{\infty}(0,+\infty;H^1(\Omega))$ we infer
\begin{align*}
\norm{\pdnu u(t)}_{H^{\frac{1}{2}}(\Gamma)} \leq C \norm{\phi(t)}_{H^1(\Gamma)} + C\norm{u(t)}_{H^1(\Omega)} \leq C, \quad \forall\, t\geq 0.
\end{align*}
On the other hand, according to \eqref{ass:F} we have
\begin{align*}
\norm{f(u)}_{L^2(\Omega)}^2 \leq C \Big (1 + \norm{u}_{L^{2p+4}(\Omega)}^{2p+4} \Big)  \quad \text{ for } p \in [0,3).
\end{align*}
For exponents $p \in (1,3)$, the Gagliardo--Nirenberg inequality in three dimensions
\begin{align*}
\norm{u}_{L^{2p+4}(\Omega)} \leq C \norm{u}_{H^2(\Omega)}^{\alpha} \norm{u}_{L^6(\Omega)}^{1-\alpha}\quad  \text{ for } \alpha = \frac{1}{2} - \frac{3}{2p+4} \in (0,1),
\end{align*}
leads to the estimate
\begin{align*}
\norm{f(u)}_{L^2(\Omega)}^2 \leq C \Big ( 1+ \norm{u}_{H^2(\Omega)}^{p-1} \Big ),
\end{align*}
where we used fact that $\alpha(2p+4) = p-1 \in (0,2)$.  Hence, together with \eqref{Est3}, we infer from the elliptic regularity theory and Young's inequality that
\begin{align*}
\norm{u}_{H^2(\Omega)}^2 &\leq C \Big (\norm{\pd_t u}_{L^2(\Omega)}^2 + \norm{f(u)}_{L^2(\Omega)}^2 + \norm{\pdnu u}_{H^{\frac{1}{2}}(\Gamma)}^2 \Big )\\
& \leq C + \frac{1}{2} \norm{u}_{H^2(\Omega)}^2,
\end{align*}
for the case $p \in(1,3)$, which results in
\begin{align}\label{Est7}
\sup_{t\geq 0} \norm{u(t)}_{H^2(\Omega)} \leq C.
\end{align}
For exponents $p \in [0,1]$, the situation is easier such that the Sobolev embedding $H^1(\Omega) \subset L^6(\Omega)$ and the estimate \eqref{Est1} implies $f(u) \in L^{\infty}(0,+\infty; L^2(\Omega))$, leading immediately to the same regularity assertion \eqref{Est7}.\medskip

\textbf{Fourth estimate}.
Employing \eqref{ass:F}, the Sobolev embedding $H^1(\Gamma) \subset L^r(\Gamma)$ for any $r \in [1, +\infty)$ and \eqref{Est1},
\begin{align*}
\norm{f_\Gamma'(\phi) \pd_t \phi}_{L^2(\Gamma)}^2 & \leq C\int_\Gamma \Big (1 + \abs{\phi}^{2(q+1)} \Big ) \abs{\pd_t \phi}^2 \dH\\
&  \leq C \Big ( \norm{\pd_t \phi}_{L^2(\Gamma)}^2 + \norm{\phi}_{L^{6(q+1)}(\Gamma)}^{2(q+1)} \norm{\pd_t \phi}_{L^3(\Gamma)}^{2} \Big )\\
&  \leq C \norm{\pd_t \phi}_{H^1(\Gamma)}^2,
\end{align*}
while using \eqref{Rob}, the Lipschitz continuity of $h$, \eqref{Est1} and \eqref{Est7},
\begin{align*}
\norm{\pd_t \phi \pdnu u}_{L^2(\Gamma)}^2 & \leq C \norm{\pd_t \phi}_{L^4(\Gamma)}^2 \Big (\norm{h(\phi) - h(0)}_{L^4(\Gamma)}^2 + \norm{h(0)}_{L^4(\Gamma)}^2 + \norm{u}_{L^4(\Gamma)}^2 \Big ) \\
& \leq C \norm{\pd_t \phi}_{L^4(\Gamma)}^2 \Big ( 1 + \norm{\phi}_{L^4(\Gamma)}^2 + \norm{u}_{L^4(\Gamma)}^2 \Big ) \\
& \leq C \norm{\pd_t \phi}_{H^1(\Gamma)}^2.
\end{align*}
Then taking the time derivative of the surface equation \eqref{phi}, and testing the resultant with $t \pd_{tt} \phi$ yields
\begin{align}
&  \frac{1}{2}  \frac{d}{dt} \Big (t\norm{\surf \pd_t \phi}_{L^2(\Gamma)}^2 \Big ) - \frac{1}{2} \norm{\surf \pd_t \phi}_{L^2(\Gamma)}^2 + t \norm{\pd_{tt} \phi}_{L^2(\Gamma)}^2 \nonumber \\
& \quad = - t \int_\Gamma \Big [f_\Gamma'(\phi) \pd_t \phi  + h''(\phi) \pd_t \phi \pdnu u + K^{-1} h'(\phi) (h'(\phi) \pd_t \phi - \pd_t u) \Big ]\pd_{tt} \phi \dH \nonumber\\
& \quad \leq \frac{t}{2} \norm{\pd_{tt} \phi}_{L^2(\Gamma)}^2 + Ct \Big (\norm{f_\Gamma'(\phi) \pd_t \phi}_{L^2(\Gamma)}^2 + \norm{\pd_t \phi \pdnu u}_{L^2(\Gamma)}^2 +  \norm{\pd_t \phi}_{L^2(\Gamma)}^2 + \norm{\pd_t u}_{L^2(\Gamma)}^2 \Big ) \nonumber\\
& \quad \leq \frac{t}{2} \norm{\pd_{tt} \phi}_{L^2(\Gamma)}^2 + Ct \norm{(\pd_t u,\pd_t \phi)}_{\bV}^2, \label{Est8aa}
\end{align}
where we have used the estimates \eqref{Est6} and \eqref{Est7}.
Integrating \eqref{Est8aa} in $t$ from $0$ to some $s > 0$, then dividing the resulting inequality by $s$ and employing \eqref{Est3} and \eqref{Est4}, we have
\begin{align}\label{Est8a}
& \norm{\surf \pd_t \phi(s)}_{L^2(\Gamma)}^2 + \frac{1}{s} \int_0^s t \norm{\pd_{tt} \phi}_{L^2(\Gamma)}^2 \dt \nonumber\\
&\quad  \leq \frac{1}{s} \int_0^s \left(Ct\norm{(\pd_t u, \pd_t \phi)}_{\bV}^2 +  \norm{\surf \pd_t \phi}_{L^2(\Gamma)}^2\right) \dt \nonumber \\
&\quad \leq C\left(1 + \frac{1}{s}\right) \quad \forall\, s>0.
\end{align}
Similarly, taking the time derivative of \eqref{u}--\eqref{Rob} and testing the resultant with $t \pd_{tt} u$, after integrating from $0$ to $s$, we obtain
\begin{align}
& \frac{1}{2}  \Big ( s \norm{\nabla \pd_t u(s)}_{L^2(\Omega)}^2 + K^{-1} s \norm{\pd_t u(s)}_{L^2(\Gamma)}^2 \Big ) + \int_0^s t \norm{ \pd_{tt} u}_{L^2(\Omega)}^2 \dt \nonumber\\
& \quad =  \int_0^s \frac{1}{2} \Big ( \norm{\nabla \pd_t u}_{L^2(\Omega)}^2 + K^{-1} \norm{\pd_t u}_{L^2(\Gamma)}^2 \Big ) \dt\nonumber \\
& \qquad - \int_0^s  t \int_\Omega f'(u) \pd_t u \pd_{tt} u \dx \dt + K^{-1}\int_0^s  t \int_\Gamma h'(\phi) \pd_ t \phi \pd_{tt} u \dH  \dt. \label{Est8b}
\end{align}
From \eqref{Est3} the first integral on the right-hand side of \eqref{Est8b} is uniformly bounded in $s \in (0,\infty)$.
Meanwhile, we infer from \eqref{Est7} and the Sobolev embedding theorem that $\|u(t)\|_{L^{\infty}(\Omega)}$ is uniformly for $t\geq 0$, and by the continuity of $f'(\cdot)$ this further implies that $\|f'(u(t))\|_{L^{\infty}(\Omega)}$ is also uniformly bounded in time.  Hence,
\begin{align*}
\abs{\int_0^s t \int_\Omega f'(u) \pd_t u \pd_{tt} u \dx \dt} \leq \frac{1}{2} \int_0^s t \norm{\pd_{tt} u}_{L^2(\Omega)}^2 \dt + C \int_0^s t \norm{\pd_t u}_{L^2(\Omega)}^2 \dt.
\end{align*}
For the surface integral, we perform an integration by parts and get
\begin{align*}
& \abs{\int_0^s t \int_\Gamma h'(\phi) \pd_t \phi \pd_{tt} u \dH \dt} \\
& \quad = \left | - \int_0^s \int_\Gamma t( h'(\phi) \pd_{tt} \phi \pd_t u + h''(\phi) (\pd_t \phi)^2 \pd_t u) + h'(\phi) \pd_t \phi \pd_t u \dH \dt \right . \\
& \quad \qquad  \left. + \int_\Gamma s h'(\phi(s)) \pd_t \phi(s) \pd_t u(s) \dH \right | \\
& \quad \leq C \int_0^s t \Big ( \norm{\pd_{tt} \phi}_{L^2(\Gamma)} + \norm{\pd_t \phi}_{L^4(\Gamma)}^2 \Big ) \norm{\pd_t u }_{L^2(\Gamma)} + \norm{\pd_t \phi}_{L^2(\Gamma)} \norm{\pd_t u}_{L^2(\Gamma)} \dt \\
& \qquad + C s \norm{\pd_t \phi(s)}_{L^2(\Gamma)} \norm{\pd_t u(s)}_{L^2(\Gamma)}.
\end{align*}
Then, dividing \eqref{Est8b} by $s$ and employing the above estimates lead to
\begin{align*}
& \norm{\nabla \pd_t u(s)}_{L^2(\Omega)}^2 + K^{-1} \norm{\pd_t u(s)}_{L^2(\Gamma)}^2 + \frac{1}{s} \int_0^s t \norm{\pd_{tt} u}_{L^2(\Omega)}^2 \dt \\
& \quad \leq \frac{C}{s} + \frac{C}{s} \int_0^s t \Big (\norm{\pd_t u}_{L^2(\Omega)}^2 + \norm{\pd_{tt} \phi}_{L^2(\Gamma)}^2 +\norm{\pd_t u}_{L^2(\Gamma)}^2+ \norm{\pd_{t} \phi}_{L^4(\Gamma)}^2 \norm{\pd_{t} u}_{L^2(\Gamma)} \Big ) \dt \\
& \qquad + \frac{C}{s} \int_0^s \norm{\pd_{t} \phi}_{L^2(\Gamma)}^2 + \norm{\pd_{t} u}_{L^2(\Gamma)}^2 \dt + C \Big (\norm{\pd_t \phi(s)}_{L^2(\Gamma)}^2 + \norm{\pd_t u(s)}_{L^2(\Gamma)}^2 \Big ) \\
& \quad \leq \frac{C}{s} \Big ( 1 + \int_0^s t \norm{\pd_{tt} \phi}_{L^2(\Gamma)}^2 \dt \Big ) + C \int_0^s \norm{\pd_{t} u}_{H^1(\Omega)}^2\dt \nonumber\\
& \qquad + \sup_{t\geq 0}\norm{\pd_t u}_{L^2(\Gamma)} \int_0^s \norm{\pd_{t} \phi}_{H^1(\Gamma)}^2 \dt
 + C\sup_{t\geq 0}\left(\norm{\pd_t \phi(t)}_{L^2(\Gamma)}^2 + \norm{\pd_t u(t)}_{L^2(\Gamma)}^2\right) \\
& \quad \leq C \Big ( 1 + \frac{1}{s} \Big ) \quad \forall\, s>0,
\end{align*}
by virtue of \eqref{Est3}, \eqref{Est4} and \eqref{Est8a}.  The above estimate, together with \eqref{Est8a}, leads to
\begin{align}\label{Est8}
\norm{\pd_t u(t)}_{H^1(\Omega)}^2 + \norm{\pd_t \phi(t)}_{H^1(\Gamma)}^2 \leq C \Big ( 1 + \frac{1}{\delta} \Big )\quad \forall\, t\geq \delta>0.
\end{align}

\textbf{Fifth estimate}. Next, we check that for all $t\geq 0$, it holds
\begin{align*}
\norm{\surf f_\Gamma(\phi)}_{L^2(\Gamma)}^2
& \leq C \int_\Gamma (1 + \abs{\phi}^{2(q+1)} ) \abs{\surf \phi}^2 \dH\nonumber\\
& \leq C \norm{\phi}_{H^1(\Gamma)}^2 + C\norm{\phi}_{L^{4q+4}(\Gamma)}^{2q+2}\norm{\surf \phi}_{L^4(\Gamma)}^2 \\
& \leq C \norm{\phi}_{H^1(\Gamma)}^2 + C\norm{\phi}_{H^2(\Gamma)}^{2q+4} \leq C, \\[2ex]
\norm{h''(\phi) (h(\phi) - u) \surf \phi }_{L^2(\Gamma)}^2 & \leq C \norm{\surf \phi}_{L^4(\Gamma)}^2 \Big ( 1 + \norm{\phi}_{L^4(\Gamma)}^2 + \norm{u}_{L^4(\Gamma)}^2 \Big ) \\
&  \leq C \norm{\phi}_{H^2(\Gamma)}^2 \Big ( 1 + \norm{\phi}_{H^1(\Gamma)}^2 + \norm{u}_{H^2(\Omega)}^2 \Big ) \leq C, \\[2ex]
\norm{h'(\phi) (h''(\phi) \surf \phi - \surf u)}_{L^2(\Gamma)}^2 &  \leq C \norm{\surf \phi}_{L^2(\Gamma)}^2 + C\norm{\surf u}_{L^2(\Gamma)}^2 \\
&  \leq C \norm{\phi}_{H^1(\Gamma)}^2 + C\norm{u}_{H^2(\Omega)}^2  \leq C,
\end{align*}
where the second last inequality comes from the trace theorem $H^2(\Omega) \hookrightarrow H^{\frac{3}{2}}(\Gamma)$ and the continuous embedding $H^{\frac{3}{2}}(\Gamma)  \subset H^1(\Gamma)$.
Then, together with \eqref{Est8} and the elliptic regularity theory for $\phi$ we deduce that
\begin{align}\label{Est9}
&\norm{\phi(t)}_{H^3(\Gamma)} \nonumber\\
&\quad \leq C \norm{\pd_t \phi(t) + f_\Gamma(\phi(t)) + K^{-1} h'(\phi(t))(h(\phi(t)) - u(t))}_{H^1(\Gamma)}+C\norm{\phi(t)}_{L^2(\Gamma)}\nonumber\\
&\quad \leq C_\delta \quad \forall\, t\geq \delta>0,
\end{align}
where the constant $C_\delta>0$ is independent of $t$ but it will tends to $+\infty$ as $\delta \to 0^+$. In a similar fashion, we use \eqref{ass:h} to deduce that $h(\phi) \in L^{\infty}(0,+\infty; H^2(\Gamma))$, and the trace theorem to conclude that $u \in L^\infty(0,+\infty; H^{\frac{3}{2}}(\Gamma))$.  Then, the relation \eqref{Rob} yields
\begin{align*}
\norm{\pdnu u(t)}_{H^{\frac{3}{2}}(\Gamma)} \leq  C\norm{h(\phi(t))}_{H^{2}(\Gamma)} + C\norm{u(t)}_{H^2(\Omega)}  \leq C,
\quad \forall\, t\geq 0.
\end{align*}
Moreover, according to \eqref{ass:F} and \eqref{Est7}, we see that
\begin{align*}
\norm{\nabla f(u)}_{L^2(\Omega)}^2
& \leq C \int_\Omega (1 + \abs{u}^{2p+2} ) \abs{\nabla u}^2 \dx \nonumber\\
& \leq C \norm{u}_{H^1(\Omega)}^2 + C \norm{u}_{L^{3p+3}(\Omega)}^{2p+2} \norm{\nabla u}_{L^6(\Omega)}^2 \leq C \quad \forall\, t\geq 0.
\end{align*}
Then, the elliptic regularity theory, \eqref{Est7} and \eqref{Est8} imply that
\begin{align}\label{Est10}
\norm{u(t)}_{H^3(\Omega)} &\leq C\left( \norm{\pd_t u(t)}_{H^1(\Omega)} + \norm{f(u(t))}_{H^1(\Omega)} + \norm{\pdnu u(t)}_{H^{\frac{3}{2}}(\Gamma)} +\norm{u(t)}_{L^2(\Omega)}\right)\nonumber\\
 &\leq C_\delta \quad \forall\, t\geq \delta>0.
\end{align}

\subsection{Proof of Theorem \ref{thm:Exist}}
Based on the estimates \eqref{Est1}, \eqref{Est2}, \eqref{Est3}, \eqref{Est4}, \eqref{Est6}, \eqref{Est7}, \eqref{Est8}, \eqref{Est9}, \eqref{Est10}, the existence of a global strong solution to problem \eqref{ACAC} with required regularities can be proved in a standard manner, using a similar Galerkin approximation scheme devised in \cite{CFL}. Moreover, using \eqref{Est3}, \eqref{Est4}, \eqref{Est6}, \eqref{Est7}, \eqref{Est8}, the elliptic regularity theorem for $(u,\phi)$, we can show that for arbitrary $T\in (0,+\infty)$, $(u, \phi)\in L^2(0,T; H^3(\Omega)\times H^3(\Gamma))$.
Then by the continuous embedding $L^2(0,T;H^3(\Omega) \times H^3(\Gamma)) \cap H^1(0,T;\bV) \subset C([0,T];\bW)$ (see e.g., \cite[Chapter 1, Theorem 3.1]{LM})
 and $T>0$ is arbitrary, we have $(u, \phi)\in C([0,+\infty); \bW)$.
 Next, by the same energy method as in \cite[\S 4]{CFL} and some minor modifications due to assumption \eqref{ass:F}, we are able to derive a continuous dependence result on initial data. More precisely, let $(u_1, \phi_1)$ and $(u_2, \phi_2)$ denote two strong solutions to problem \eqref{ACAC} corresponding to initial data $(u_{0,1}, \phi_{0,1})$ and $(u_{0,2}, \phi_{0,2})$, respectively, it holds
\begin{align*}
&\norm{(u_1(t)-u_2(t), \phi_1(t)-\phi_2(t))}_{\bH}^2 + \int_0^t \norm{(u_1-u_2, \phi_1-\phi_2)}_{\bV}^2 \ds \nonumber\\
&\quad \leq C e^{C t} \norm{(u_{0,1}-u_{0,2}, \phi_{0,1}-\phi_{0,2})}_{\bH}^2, \quad \forall\, t>0,
\end{align*}
for some positive constant $C$ depending on the initial data, $\Omega$, $\Gamma$, but not on $u$, $\phi$ and $t$.
Then the uniqueness of strong solutions easily follows.

The proof of Theorem \ref{thm:Exist} is complete.

\section{Extended {\L}ojasiewicz--Simon Inequality}\label{sec:LS}
In this section, our aim is to establish an extended {\L}ojasiewicz--Simon inequality, which plays a crucial role in the study of long-time behaviour for the bulk--surface coupled Allen--Cahn system \eqref{ACAC}.

From assumptions \eqref{ass:dom}--\eqref{ass:F}, it is straightforward to verify that the energy functional $E$ is continuously Fr\'echet differentiable on $\bV$.
For any $(u,\phi),\, (w, \xi) \in \bV$, we define $E'=M: \bV \to \bV'$ by
\begin{align}
\inner{M(u,\phi)}{(w,\xi)}_{\bV',\bV} & =  \left. \frac{d E(u+\eps w, \phi + \eps \xi)}{d \eps} \right \vert_{\eps = 0}\nonumber \\
& =  \int_\Omega \nabla u \cdot \nabla w + f(u) w \dx + \int_\Gamma \surf \phi \cdot \surf \xi + f_\Gamma(\phi) \xi \dH \nonumber\\
& \quad + \int_\Gamma \frac{1}{K} (u - h(\phi))( w - h'(\phi) \xi) \dH. \label{defn:MM}
\end{align}
We say that $(u_*,\phi_*)\in \bV$ is a critical point of $E(u,\phi)$ if $E'(u_*, \phi_*)=0$. Consider the stationary problem
\begin{subequations}\label{stat}
\begin{alignat}{3}
-\Lap u_* + f(u_*) = 0 &\quad  \text{ in } \Omega, \label{stat:u} \\
K \pdnu u_* + u_* = h(\phi_*) &\quad  \text{ on } \Gamma, \label{stat:Rob} \\
- \LB \phi_* + f_\Gamma(\phi_*) + h'(\phi_*) \pdnu u_* = 0 &\quad  \text{ on } \Gamma. \label{stat:phi}
\end{alignat}
\end{subequations}
Then we prove the following result that gives the equivalence between the critical points of $E$ and the solutions of problem \eqref{stat}.
\begin{prop}\label{thm:stat}
If $(u_*,\phi_*) \in \bW$ is a strong solution to the stationary problem \eqref{stat}, then $(u_*, \phi_*)$ is a critical point to the functional $E$, i.e., $E'(u_*, \phi_*) = 0$ as an equality in $\bV'$.  Conversely, if $(u_*, \phi_*)$ is a critical point to the functional $E$, then $(u_*, \phi_*)\in \bW$ is a strong solution to the stationary problem \eqref{stat}.
\end{prop}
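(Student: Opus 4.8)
The plan is to prove the two implications separately by a standard variational/elliptic regularity argument. For the first (``strong solution $\Rightarrow$ critical point''), I would take $(u_*,\phi_*)\in\bW$ solving \eqref{stat} and test \eqref{stat:u} with an arbitrary $w\in H^1(\Omega)$, integrating by parts to produce the boundary term $\int_\Gamma \pdnu u_* \, w \dH$. Using the Robin condition \eqref{stat:Rob} to rewrite $\pdnu u_* = K^{-1}(h(\phi_*)-u_*)$, this boundary term becomes $-K^{-1}\int_\Gamma (u_* - h(\phi_*)) w \dH$. Similarly, testing \eqref{stat:phi} with $\xi\in H^1(\Gamma)$ and integrating by parts over $\Gamma$ (no boundary of $\Gamma$ since $\Gamma$ is closed) produces $\int_\Gamma \surf\phi_*\cdot\surf\xi + f_\Gamma(\phi_*)\xi + h'(\phi_*)\pdnu u_* \,\xi \dH$, and again substituting $\pdnu u_* = K^{-1}(h(\phi_*)-u_*)$ gives the term $-K^{-1}\int_\Gamma (u_*-h(\phi_*))h'(\phi_*)\xi\dH$. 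Adding the two identities and comparing with the formula \eqref{defn:MM} shows exactly $\inner{M(u_*,\phi_*)}{(w,\xi)}_{\bV',\bV}=0$ for all $(w,\xi)\in\bV$, i.e.\ $E'(u_*,\phi_*)=0$. One should note that all the integrals make sense: $(u_*,\phi_*)\in\bW\subset L^\infty(\Omega)\times L^\infty(\Gamma)$ by Sobolev embedding in dimension $3$, so $f(u_*),f_\Gamma(\phi_*)\in L^2$ by the growth bounds in \eqref{ass:F}, and $h,h'$ are bounded / Lipschitz by \eqref{ass:h}.

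For the converse, suppose $(u_*,\phi_*)\in\bV$ is a critical point, i.e.\ the identity $\inner{M(u_*,\phi_*)}{(w,\xi)}_{\bV',\bV}=0$ holds for all $(w,\xi)\in\bV$. First I would restrict to test functions of the form $(w,0)$ with $w\in C_c^\infty(\Omega)$; then only the bulk integral survives and one reads off $-\Lap u_* + f(u_*)=0$ in the sense of distributions in $\Omega$, hence \eqref{stat:u}. Here I must first argue that $f(u_*)\in L^2(\Omega)$: since $(u_*,\phi_*)$ is only assumed in $\bV = H^1(\Omega)\times H^1(\Gamma)$ at this stage, I use $H^1(\Omega)\subset L^6(\Omega)$ and the bound $|f(s)|\le C(1+|s|^{p+1})$ with $p<3$ from \eqref{ass:F}, so $p+1<4<6$ and $f(u_*)\in L^2(\Omega)$; likewise $f_\Gamma(\phi_*)\in L^2(\Gamma)$ via $H^1(\Gamma)\subset L^r(\Gamma)$ for all finite $r$. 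With $-\Lap u_* = -f(u_*)\in L^2(\Omega)$ and $u_*\in H^1(\Omega)$, elliptic regularity bootstraps $u_*\in H^2(\Omega)$, which in turn gives $\pdnu u_*\in H^{1/2}(\Gamma)\subset L^2(\Gamma)$ a well-defined trace. Next, taking general $(w,0)$ with $w\in H^1(\Omega)$ and integrating by parts using $u_*\in H^2(\Omega)$, the bulk equation kills the interior part and leaves $\int_\Gamma \pdnu u_*\, w \dH + K^{-1}\int_\Gamma (u_*-h(\phi_*))w\dH = 0$ for all traces $w|_\Gamma$, which by density of traces in $L^2(\Gamma)$ yields the Robin condition \eqref{stat:Rob}. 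Finally, taking $(0,\xi)$ with $\xi\in H^1(\Gamma)$, the surface part of \eqref{defn:MM} gives the weak form $\int_\Gamma \surf\phi_*\cdot\surf\xi + \big(f_\Gamma(\phi_*) - K^{-1}(u_*-h(\phi_*))h'(\phi_*)\big)\xi\dH = 0$; since $\pdnu u_* = K^{-1}(h(\phi_*)-u_*)\in L^2(\Gamma)$ and $f_\Gamma(\phi_*)\in L^2(\Gamma)$, this is an $H^1(\Gamma)$-weak formulation of $-\LB\phi_* = -f_\Gamma(\phi_*) - h'(\phi_*)\pdnu u_* \in L^2(\Gamma)$, so elliptic regularity on the closed manifold $\Gamma$ gives $\phi_*\in H^2(\Gamma)$ and \eqref{stat:phi} holds a.e.\ Hence $(u_*,\phi_*)\in\bW$ is a strong solution.

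The main obstacle, and the point requiring the most care, is the regularity bootstrap in the converse direction: one only starts with $(u_*,\phi_*)\in\bV$, so before any integration by parts is legitimate one must verify that $f(u_*), f_\Gamma(\phi_*)$ lie in $L^2$ and that $\Lap u_*, \LB\phi_*$ can be identified with $L^2$ functions, so that $\pdnu u_*$ exists as an $L^2(\Gamma)$-trace and the boundary integrals in \eqref{defn:MM} are meaningful. This is exactly where the subcritical growth exponent $p\in[0,3)$ in \eqref{ass:F} is essential (it guarantees $f(u_*)\in L^2(\Omega)$ directly from $u_*\in H^1(\Omega)\subset L^6(\Omega)$ without needing higher regularity first), while on the surface the unrestricted exponent $q$ is harmless because $H^1(\Gamma)$ embeds into every $L^r(\Gamma)$. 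Once the $H^2\times H^2$ regularity is in hand, the remaining steps — separating test functions into interior, boundary-trace, and surface pieces — are routine density arguments.
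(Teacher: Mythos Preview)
Your first implication matches the paper's proof. The converse, however, has a genuine gap in the bulk regularity step.

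First, the growth bound is misread: assumption \eqref{ass:F} controls $f''$, not $f'$, so integrating twice gives $|f(s)|\le C(1+|s|^{p+2})$, not $|s|^{p+1}$. Consequently, $f(u_*)\in L^2(\Omega)$ requires $u_*\in L^{2p+4}(\Omega)$, and the embedding $H^1(\Omega)\subset L^6(\Omega)$ only delivers this when $p\le 1$. For $p\in(1,3)$ --- which is allowed --- your direct argument fails, and this is precisely where the paper is careful: it invokes the Gagliardo--Nirenberg bootstrap from the derivation of \eqref{Est7}, estimating $\norm{f(u_*)}_{L^2(\Omega)}^2\le C(1+\norm{u_*}_{H^2(\Omega)}^{p-1})$ with $p-1<2$ and absorbing the $H^2$-norm back into the left-hand side of the elliptic estimate.

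Second, even granting $f(u_*)\in L^2(\Omega)$, the step ``$-\Lap u_*\in L^2(\Omega)$ and $u_*\in H^1(\Omega)$, so elliptic regularity gives $u_*\in H^2(\Omega)$'' is not valid as stated: without a boundary condition you obtain only $H^2_{\mathrm{loc}}(\Omega)$. You then try to recover the Robin condition \emph{after} asserting $u_*\in H^2(\Omega)$, which is circular. The paper avoids this by reversing the order: it first takes $w=0$ to isolate the \emph{surface} equation, where the unrestricted embedding $H^1(\Gamma)\subset L^r(\Gamma)$ immediately gives $\phi_*\in H^2(\Gamma)$ and hence $h(\phi_*)\in H^{1/2}(\Gamma)$; only then does it take $\xi=0$, which yields the bulk problem \emph{together with} its Robin boundary condition in weak form, with Robin data already known to lie in $H^{1/2}(\Gamma)$. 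Elliptic regularity for the Robin problem (combined with the bootstrap above for $p>1$) then legitimately gives $u_*\in H^2(\Omega)$. Reordering your argument this way, and inserting the Gagliardo--Nirenberg step for $p\in(1,3)$, would close the gap.
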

\begin{proof}
If $(u_*, \phi_*)$ satisfies the stationary problem, then for any $(w, \xi) \in \bV$ we have
\begin{align*}
\int_\Omega (- \Lap u_* + f(u_*)) w \dx + \int_\Gamma (- \LB \phi_* + f_\Gamma(\phi_*) + h'(\phi_*) \pdnu u_*) \xi \dH = 0.
\end{align*}
Integrating by parts and applying the Robin boundary condition \eqref{stat:Rob} for $u_*$ yields
\begin{align}
0 & = \int_\Omega \nabla u_* \cdot \nabla w + f(u_*) w \dx + \int_\Gamma \surf \phi_* \cdot \surf \xi + f_\Gamma(\phi_*) \xi \dH\nonumber \\
& \quad + \int_\Gamma \frac{1}{K} (u_* - h(\phi_*))(v - h'(\phi_*) \xi) \dH\nonumber \\
&  = \inner{E'(u_*, \phi_*)}{(w,\xi)}_{\bV',\bV}.\label{stat:weak}
\end{align}
Hence, $(u_*, \phi_*)$ is a critical point of $E$.

Conversely, if $(u_*, \phi_*)$ is a critical point of $E$, then $(u_*, \phi_*) \in \bV$ is a weak solution to the stationary problem \eqref{stat}.  Substituting $w = 0$ in \eqref{stat:weak} yields the weak formulation of the elliptic equation
\begin{align*}
- \LB \phi_* = - f_\Gamma(\phi_*) - K^{-1} h'(\phi_*)(h(\phi_*) - u_*) =: g_\Gamma \quad \text{ on } \Gamma.
\end{align*}
Using \eqref{ass:h}, the trace theorem $H^1(\Omega) \hookrightarrow L^2(\Gamma)$, the growth assumption \eqref{ass:F} for $f_\Gamma$ and the Sobolev embedding $H^1(\Gamma) \subset L^{r}(\Gamma)$ for any $r \in [1,+\infty)$, we find that the right-hand side $g_\Gamma$ is bounded in $L^2(\Gamma)$.  Then, the elliptic regularity theory yields that $\phi_*$ is bounded in $H^2(\Gamma)$, and by \eqref{ass:h} one can see that $h(\phi_*) \in H^1(\Gamma) \subset H^{\frac{1}{2}}(\Gamma)$. Then, substituting $\xi = 0$ in \eqref{stat:weak} yields the weak formulation of the elliptic problem
\begin{align*}
- \Lap u_* = - f(u_*) =: g_\Omega &\quad  \text{ in } \Omega, \\
\pdnu u_*  = K^{-1}(h(\phi_*) - u_*) =: \tilde g_\Gamma & \quad \text{ on } \Gamma,
\end{align*}
where from the above discussion it holds that $\tilde g_\Gamma \in H^{\frac{1}{2}}(\Gamma)$.  By \eqref{ass:F} and the embedding $H^1(\Omega) \subset L^6(\Omega)$ we see that for exponents $p \in [0,1]$,
\begin{align*}
\norm{f(u)}_{L^2(\Omega)} \leq C( 1 + \norm{u}_{L^{2p+4}(\Omega)}^{p+2}) \leq C(1 + \norm{u}_{H^1(\Omega)}^{p+2}) \leq C,
\end{align*}
and so $g_\Omega \in L^2(\Omega)$.  By the elliptic regularity theory we obtain $u_* \in H^2(\Omega)$.   For exponents $p \in (1,3)$, we follow a similar argument as in the derivation of \eqref{Est7} to deduce that $u_* \in H^2(\Omega)$.  Therefore, $(u_*, \phi_*) \in \bW$ is a strong solution to the stationary problem \eqref{stat}.

The proof is complete.
\end{proof}

Now we state the main result of this section.
\begin{thm}[Extended {\L}ojasiewicz--Simon inequality]\label{thm:LSa}
Suppose that \eqref{ass:dom}--\eqref{ass:F} are satisfied. Let $(u_*,\phi_*)\in \bW$ be a critical point of the energy functional $E(u,\phi)$ defined by \eqref{energy}.
There exist constants $\theta \in (0,\frac{1}{2})$ and $\beta > 0$ depending on $(u_*, \phi_*)$, such that for any $(u,\phi) \in \bV$ satisfying $\norm{(u, \phi) - (u_*, \phi_*)}_{\bV} < \beta$, we have
\begin{align}\label{LS:alt}
\norm{M(u, \phi)}_{\bV'} \geq \abs{E(u, \phi) - E(u_*, \phi_*)}^{1-\theta}.
\end{align}
\end{thm}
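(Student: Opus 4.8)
The plan is to run a Lyapunov--Schmidt reduction to a finite-dimensional problem and then invoke the classical {\L}ojasiewicz gradient inequality for real-analytic functions on $\R^n$; the structural fact that makes this work, in spite of the bulk--surface coupling, is that the linearization of $M$ at $(u_*,\phi_*)$ has finite-dimensional kernel. Concretely, I would first examine $L:=E''(u_*,\phi_*)\colon\bV\to\bV'$, which acts by
\begin{align*}
\inner{L(v,\psi)}{(w,\xi)}_{\bV',\bV} &= \int_\Omega \nabla v\cdot\nabla w + f'(u_*)vw \dx + \int_\Gamma \surf\psi\cdot\surf\xi + f_\Gamma'(\phi_*)\psi\xi \dH\\
&\quad + \frac{1}{K}\int_\Gamma (v-h'(\phi_*)\psi)(w-h'(\phi_*)\xi)\dH - \frac{1}{K}\int_\Gamma (u_*-h(\phi_*))h''(\phi_*)\psi\xi\dH,
\end{align*}
and is symmetric. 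Since $(u_*,\phi_*)\in\bW\subset L^\infty(\Omega)\times L^\infty(\Gamma)$, the coefficients $f'(u_*)$, $f_\Gamma'(\phi_*)$, $h''(\phi_*)$, $u_*-h(\phi_*)$ are bounded; combining $f'\geq-c_4$, $f_\Gamma'\geq-c_4$, the non-negativity of the Robin term and the trace interpolation $\norm{v}_{L^2(\Gamma)}^2\leq\eps\norm{\nabla v}_{L^2(\Omega)}^2+C_\eps\norm{v}_{L^2(\Omega)}^2$, one obtains a coercivity estimate modulo $\bH$: $\inner{L(v,\psi)}{(v,\psi)}_{\bV',\bV}+\lambda\norm{(v,\psi)}_{\bH}^2\geq c\norm{(v,\psi)}_{\bV}^2$ for suitable $\lambda,c>0$. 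Hence $L$ differs from a Banach-space isomorphism $\bV\to\bV'$ by an operator factoring through the compact embedding $\bV\hookrightarrow\bH$, so $L$ is a self-adjoint Fredholm operator of index $0$; thus $N:=\ker L$ is finite-dimensional, and elliptic regularity for the linearized stationary system (of the same coupled Robin type as \eqref{stat}) gives $N\subset\bW$.

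Next I would reduce. Let $\Pi\colon\bH\to N$ be the $\bH$-orthogonal projection; as $N$ is spanned by elements of $\bW$, $\Pi$ induces bounded projections on $\bV$ and on $\bV'$, and by the Fredholm alternative $(I-\Pi)L$ restricts to an isomorphism of $N_1:=(I-\Pi)\bV$ onto $(I-\Pi)\bV'$. I would then solve $(I-\Pi)M\big((u_*,\phi_*)+w+z\big)=0$ for $z=g(w)\in N_1$, with $w$ near $0$ in the finite-dimensional space $N$, by the analytic implicit function theorem; its hypotheses are that the above restriction of $(I-\Pi)L$ is invertible and that $M=E'$ is real-analytic near $(u_*,\phi_*)$, the latter following from the analyticity of $f,f_\Gamma,h$ together with the growth conditions \eqref{ass:h}--\eqref{ass:F} and the embeddings $H^1(\Omega)\subset L^6(\Omega)$ and $H^1(\Gamma)\subset L^r(\Gamma)$ ($r<\infty$), which make the relevant Nemytskii operators --- including those coming from the Robin term $\frac1K(u-h(\phi))(w-h'(\phi)\xi)$ --- act analytically between the appropriate spaces (the subcritical restriction $p<3$ being essential here). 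Putting $\mathcal G(w):=E\big((u_*,\phi_*)+w+g(w)\big)$ defines a real-analytic function on a neighbourhood of $0$ in $N$ with $\mathcal G'(0)=\Pi M(u_*,\phi_*)=0$; since $(I-\Pi)M$ vanishes along the reduced manifold and $g'(w)$ takes values in $N_1$, the chain rule gives $\mathcal G'(w)=\Pi M\big((u_*,\phi_*)+w+g(w)\big)$ under the identification $N\cong N'$.

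Now I would apply the finite-dimensional {\L}ojasiewicz inequality to the analytic $\mathcal G$ at $0$: there exist $\theta\in(0,1/2]$ and $C,\sigma>0$ with $|\mathcal G(w)-\mathcal G(0)|^{1-\theta}\leq C|\mathcal G'(w)|$ for $|w|<\sigma$. For $(u,\phi)$ in a small $\bV$-ball about $(u_*,\phi_*)$, decompose $(u,\phi)-(u_*,\phi_*)=w+z$ with $w\in N$, $z\in N_1$, and set $\delta:=z-g(w)\in N_1$. Since $(I-\Pi)M\big((u_*,\phi_*)+w+g(w)\big)=0$, a first-order expansion of $M$ about $(u_*,\phi_*)+w+g(w)$ together with the isomorphism property of $(I-\Pi)L|_{N_1}$ (using continuity of $E''$ to absorb $E''(\cdot)-L$) gives $\norm{\delta}_{\bV}\leq C\norm{(I-\Pi)M(u,\phi)}_{\bV'}$, hence also $|\mathcal G'(w)|\leq C\norm{M(u,\phi)}_{\bV'}$; and a second-order expansion of $E$ about $(u_*,\phi_*)+w+g(w)$, whose first-order term vanishes because $\delta\in N_1$ is $\bH$-orthogonal to $N\ni\Pi M\big((u_*,\phi_*)+w+g(w)\big)$, gives $|E(u,\phi)-\mathcal G(w)|\leq C\norm{\delta}_{\bV}^2\leq C\norm{M(u,\phi)}_{\bV'}^2$. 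Combining these with the finite-dimensional inequality yields $|E(u,\phi)-E(u_*,\phi_*)|^{1-\theta}\leq C\norm{M(u,\phi)}_{\bV'}+C\norm{M(u,\phi)}_{\bV'}^{2(1-\theta)}$; since $\theta<1/2$ forces $2(1-\theta)>1$, shrinking $\beta$ so that $\norm{M(u,\phi)}_{\bV'}\leq1$, then decreasing $\theta$ slightly and shrinking $\beta$ once more to absorb the constant, produces \eqref{LS:alt} with $\theta\in(0,1/2)$.

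The step I expect to be the main obstacle is the construction of this scheme in the presence of the nonlinear Robin coupling: establishing the real-analyticity of $M$ near $(u_*,\phi_*)$ requires handling the superposition operators generated by $h,h',h''$ (and, for the third derivative of $E$, $h'''$, controlled via \eqref{ass:h}) on the boundary simultaneously with the bulk and surface potentials, and since $L$ is genuinely not block-diagonal, neither the coercivity-modulo-$\bH$ estimate nor the bootstrap $N\subset\bW$ decouples into separate bulk and surface problems --- both must be carried out for the coupled system, with the boundary trace terms absorbed via the interpolation inequality above. A further delicate point is making the identity $\mathcal G'=\Pi M(\cdots)$ and the action of $\Pi$ on $\bV'$ precise with respect to the Gelfand triple $\bV\hookrightarrow\bH\hookrightarrow\bV'$.
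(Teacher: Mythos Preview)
Your Lyapunov--Schmidt route---project onto $N=\ker L$, solve $(I-\Pi)M=0$ by the analytic implicit function theorem, apply the classical {\L}ojasiewicz inequality to the reduced function $\mathcal G$, then transfer back---is a genuine alternative to the paper's argument. The paper instead follows the Haraux--Jendoubi scheme: it fixes finite-dimensional eigenspaces $W_m\times Y_m$ of the decoupled operators $A_\Omega$ and $A_\Gamma$, adds $\theta_m P_m$ to $L_{(0,0)}$ to force invertibility, locally inverts the nonlinear map $\mathcal N=\Pi_m+\mathcal M$, and pulls the energy back through the inverse $\Psi$ to obtain the finite-dimensional function $\mathcal J$. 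Both routes land on the same structure; your choice of $N$ is more intrinsic, while the paper's avoids having to characterize $\ker L$ and instead works with explicitly constructed spectral projections.

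There is, however, a real gap in your analyticity step. You assert that $M\colon\bV\to\bV'$ is real-analytic near $(u_*,\phi_*)$, citing the subcritical growth $p<3$ together with $H^1(\Omega)\subset L^6(\Omega)$ and $H^1(\Gamma)\subset L^r(\Gamma)$. Those hypotheses deliver $C^1$ (and, with more care, $C^2$) Fr\'echet differentiability of the Nemytskii operators on $\bV$, but \emph{not} analyticity: a superposition $u\mapsto f(u)$ acting between spaces that do not embed into $L^\infty$ is analytic only when $f$ is a polynomial, because the $n$-th Taylor term involves a product of $n$ increments which H\"older and $H^1\subset L^6$ cannot place in any fixed $L^q$ once $n$ is large. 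Assumption \eqref{ass:F} permits non-polynomial nonlinearities (e.g.\ $f(s)=s^4+\sin s$), so the analytic implicit function theorem cannot be invoked on $\bV$ as you propose. The repair is precisely the two-level construction the paper carries out: run the analytic inversion on $\bW\to\bH$, where $\bW\subset L^\infty(\Omega)\times L^\infty(\Gamma)$ makes every Nemytskii map trivially analytic (so $g$, and hence $\mathcal G$, are analytic---note $N\subset\bW$ by your own elliptic bootstrap), and use only the $C^1$ local inversion on $\bV\to\bV'$ for the transfer estimates $\|\delta\|_{\bV}\le C\|(I-\Pi)M(u,\phi)\|_{\bV'}$ and $|E(u,\phi)-\mathcal G(w)|\le C\|\delta\|_{\bV}^2$. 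With that adjustment your argument goes through; the difficulty you flagged is not the Robin coupling per se but the choice of function space in which analyticity is claimed.
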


The proof of Theorem \ref{thm:LSa} is based along the procedure in \cite{HJ}, see in particular \cite{SW} for the modified argument that is valid for nonlinear dynamic boundary conditions.
However, in our current case, some new difficulties due to the bulk--surface coupling and the nonlinear Robin type boundary condition have to be handled.

For any critical points $(u_*, \phi_*) \in \bW$ (cf. Proposition \ref{thm:stat}) of $E$, we consider perturbation functions $(v,\psi)$ and write
\begin{align*}
u = u_* + v, \quad \phi = \phi_* + \psi, \quad \E(v,\psi)  := E(u, \phi) := E(u_* + v, \phi_* + \psi).
\end{align*}
We also set
\begin{align*}
\M(v,\psi) = M(u,\phi) = M(u_*+v, \phi_* + \psi),
\end{align*}
so that from the definition of a critical point of $E$, we have $\M(0,0) = M(u_*, \phi_*) = 0$. Keeping the above notations in mind, it remains to prove that:
for a given critical point $(u_*, \phi_*)$ of $E$, there exist constants $\theta \in (0,\frac{1}{2})$ and $\beta > 0$ depending on $(u_*, \phi_*)$, such that for any $(v,\psi) \in \bV$ satisfying $\norm{(v,\psi)}_{\bV} < \beta$, it holds
\begin{align}\label{LS}
\norm{\M(v,\psi)}_{\bV'} \geq \abs{\E(v,\psi) - \E(0,0)}^{1-\theta}.
\end{align}

The proof of Theorem \ref{thm:LSa} consists of several steps.

\paragraph{Step 1: Analysis of a certain linear operator.}
We define the strictly positive, self-adjoint and unbounded operator $A_\Gamma := - \LB + I$ from $D(A_\Gamma) = H^2(\Gamma)$ to $L^2(\Gamma)$.  Then, standard spectral theory yields the existence of a complete orthonormal basis $\{y_j\}_{j \in \N} \subset D(A_\Gamma)$ in $L^2(\Gamma)$, along with an ordered sequence of eigenvalues $0 < \mu_1 \leq \mu_2 \leq \cdots$ satisfying $\mu_j \to \infty$ as $j \to \infty$, such that for all $j \in \N$,
\begin{align*}
A_\Gamma y_j = \mu_j y_j.
\end{align*}
Next, following \cite[\S 5]{CFL}, we define the Hilbert space $\tilde V$ and associated inner product:
\begin{align*}
\tilde V := \{(a,b) \in H^1(\Omega) \times H^{1/2}(\Gamma) \, : \, a \vert_\Gamma = b\}, \quad (\bm{p}, \bm{q})_{\tilde V} = (p,q)_{H^1(\Omega)}
\end{align*}
for $\bm{p} = (p, p \vert_\Gamma)$ and $\bm{q} = (q, q \vert_\Gamma)$.  Then, it is shown that the abstract operator $A_\Omega : \tilde V \to (\tilde V)'$ defined by
\begin{align*}
\inner{A_\Omega \bm{p}}{\bm{q}}_{(\tilde V)',\tilde V} = \int_\Omega \nabla p \cdot \nabla q \dx + \int_\Gamma K^{-1} p \vert_\Gamma \, q \vert_\Gamma \dH
\end{align*}
is strictly positive, self-adjoint, coercive on $\tilde V$ with compact inverse.  Hence, by standard spectral theory there exists an ordered sequence of eigenvalues $0 < \lambda_1 \leq \lambda_2 \leq \cdots$ satisfying $\lambda_j \to \infty$ as $j \to \infty$, and a corresponding sequence of eigenfunctions $\{w_j\}_{j \in \N}$ that forms an orthonormal basis in $\bH$ satisfying
\begin{equation}\label{ortho}
\begin{aligned}
(\bm{w}_i, \bm{w}_j)_{\bH} = ( (w_i, w_i \vert_\Gamma), (w_j, w_j \vert_\Gamma) )_{\bH} & = \delta_{ij}, \\
\int_\Omega \nabla w_i \cdot \nabla w_j \dx + \int_\Gamma K^{-1} w_i \vert_\Gamma \, w_j \vert_\Gamma \dH & = \lambda_i \delta_{ij},
\end{aligned}
\end{equation}
such that
\begin{align*}
\inner{A_{\Omega} \bm{w}_i}{\bm{p}}_{(\tilde V)',\tilde V} = \lambda_i (\bm{w}_i, \bm{p})_{\tilde{\bH}} := \lambda_i \Big ( \int_\Omega w_i p \dx + \int_\Gamma K^{-1} w_i \vert_\Gamma p \vert_\Gamma \dH \Big ) \quad \forall \bm{p} \in \tilde V.
\end{align*}
Equivalently, for any $i \in \N$, it holds that
\begin{align*}
- \Lap w_i = \lambda_i w_i \text{ in } \Omega, \quad \pdnu w_i + K^{-1} w_i = \lambda_i K^{-1} w_i \text{ on } \Gamma.
\end{align*}
For $m \in \N$, we introduce the finite-dimensional subspaces
\begin{align*}
W_m := \mathrm{span}\{w_1, \dots, w_m\}, \quad Y_m := \mathrm{span}\{y_1, \dots, y_m\},
\end{align*}
with the associated orthogonal projection $P_m$ in $\bH$ onto $W_m \times Y_m$.  For $\bm{u} = (u_1,u_2) \in \bH$, we use the notation $P_m u_1$ and $P_m u_2$ to denote the first and second components of $P_m \bm{u}$, respectively.  Then, for $\bm{v} = (v_1, v_2) \in \tilde V \times H^1(\Gamma)$, consider the operator $\bm{A} : \tilde V \times H^1(\Gamma) \to (\tilde{V})' \times H^1(\Gamma)'$ defined as
\begin{align}\label{op:A}
\inner{\bm{A}\bm{u}}{\bm{v}} := \int_\Omega \nabla u_1 \cdot \nabla v_1 \dx + \int_\Gamma K^{-1} u_1 \vert_\Gamma \, v_1 \vert_\Gamma \dH + \int_\Gamma \surf u_2 \cdot \surf v_2 + u_2 v_2 \dH.
\end{align}
By the orthonormality of $\{w_i\}_{i \in \N}$ in $\bH$ and $\{y_i\}_{i \in \N}$ in $L^2(\Gamma)$, as well as the property \eqref{ortho}, we have the following result.

\begin{lem}
For any $\bm{u} = (u_1, u_2) \in \bH$, it holds that
\begin{align*}
\inner{\bm{A} P_m \bm{u}}{P_m \bm{u}} & \geq \min(1,K^{-1}) \min(\lambda_1, \mu_1) \norm{P_m \bm{u}}_{\bH}^2,\\
 \inner{\bm{A}(\bm{u} - P_m \bm{u})}{\bm{u} - P_m \bm{u}} &\geq \min(1,K^{-1})\min(\lambda_{m},\mu_{m}) \norm{\bm{u} - P_m \bm{u}}_{\bH}^2.
\end{align*}
\end{lem}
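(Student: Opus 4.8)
The plan is to exploit the block-diagonal structure of $\bm{A}$ together with the two eigen-expansions constructed above. Writing $\bm{v} = (v_1, v_2)$, one has
\begin{align*}
\inner{\bm{A} \bm{v}}{\bm{v}} &= a_\Omega(v_1) + a_\Gamma(v_2), \\
a_\Omega(v_1) &:= \int_\Omega \abs{\nabla v_1}^2 \dx + \frac{1}{K} \int_\Gamma \abs{v_1 \vert_\Gamma}^2 \dH, \qquad a_\Gamma(v_2) := \int_\Gamma \abs{\surf v_2}^2 + \abs{v_2}^2 \dH,
\end{align*}
so that everything decouples into an $\Omega$-part governed by $(\lambda_i, w_i)$ and a $\Gamma$-part governed by $(\mu_i, y_i)$. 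The algebraic facts I will use are $a_\Omega(w_i, w_j) = \lambda_i \delta_{ij}$ with $\{w_i\}$ orthonormal for the weighted inner product $(p,q) \mapsto \int_\Omega p q \dx + K^{-1}\int_\Gamma p\vert_\Gamma \, q\vert_\Gamma \dH$, and $a_\Gamma(y_i, y_j) = \mu_i \delta_{ij}$ with $\{y_i\}$ orthonormal in $L^2(\Gamma)$; these are the content of \eqref{ortho} (for $A_\Omega$) and of the analogous relations for $A_\Gamma$.

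For the first inequality I would expand $P_m \bm{u} \in W_m \times Y_m$ as $P_m u_1 = \sum_{i=1}^m a_i w_i$, $P_m u_2 = \sum_{i=1}^m b_i y_i$, substitute into the identity above, and use the eigen-relations to get
\begin{align*}
\inner{\bm{A} P_m \bm{u}}{P_m \bm{u}} = \sum_{i=1}^m \lambda_i a_i^2 + \sum_{i=1}^m \mu_i b_i^2 \geq \min(\lambda_1, \mu_1) \Big( \sum_{i=1}^m a_i^2 + \sum_{i=1}^m b_i^2 \Big).
\end{align*}
It then remains to pass from the coefficient norm to $\norm{P_m \bm{u}}_{\bH}^2$: orthonormality of $\{y_i\}$ in $L^2(\Gamma)$ gives $\sum_{i=1}^m b_i^2 = \norm{P_m u_2}_{L^2(\Gamma)}^2$, while $\sum_{i=1}^m a_i^2 = \norm{P_m u_1}_{L^2(\Omega)}^2 + K^{-1}\norm{(P_m u_1)\vert_\Gamma}_{L^2(\Gamma)}^2 \geq \min(1, K^{-1})\norm{P_m u_1}_{L^2(\Omega)}^2$, which is precisely where the factor $\min(1,K^{-1})$ is produced; adding the two estimates gives the first claim.

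The second inequality follows from the same computation applied to $\bm{u} - P_m \bm{u}$: completeness of the two eigenbases yields $u_1 - P_m u_1 = \sum_{i>m} a_i w_i$ and $u_2 - P_m u_2 = \sum_{i>m} b_i y_i$, whence
\begin{align*}
\inner{\bm{A}(\bm{u} - P_m \bm{u})}{\bm{u} - P_m \bm{u}} = \sum_{i>m} \lambda_i a_i^2 + \sum_{i>m} \mu_i b_i^2 \geq \min(\lambda_{m+1}, \mu_{m+1}) \sum_{i>m} \big( a_i^2 + b_i^2 \big),
\end{align*}
and then $\lambda_{m+1} \geq \lambda_m$, $\mu_{m+1} \geq \mu_m$ together with the same weighted-norm comparison as above give the stated bound (with room to spare). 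I expect no analytic difficulty here; the only point that genuinely needs care — and the only real obstacle — is the bookkeeping around $P_m$, namely making sure that the projection onto $W_m \times Y_m$ is the spectral truncation attached to the two eigensystems (equivalently, that the inner product making $P_m$ orthogonal is the one in which $\{w_i\}$ and $\{y_i\}$ are orthonormal, up to the boundary weight $K^{-1}$), so that $P_m \bm{u} = \sum_{i \le m} a_i w_i \oplus \sum_{i \le m} b_i y_i$ and $\bm{u} - P_m \bm{u}$ carries no component along the first $m$ modes. Once this identification is fixed, both inequalities drop out of the two diagonal computations above.
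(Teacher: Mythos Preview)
Your proposal is correct and takes essentially the same approach as the paper: expand $P_m \bm{u}$ and $\bm{u} - P_m \bm{u}$ in the two eigenbases, use the diagonal form of the bulk and surface bilinear forms, and bound below via the ordering of the eigenvalues. The caveat you flag about which inner product makes $P_m$ orthogonal and $\{w_i\}$ orthonormal (the weighted $\int_\Omega \cdot + K^{-1}\int_\Gamma \cdot$ versus the plain $\bH$ product) is well-placed and is indeed the only point requiring care; once that identification is fixed, the computation is identical to the paper's.
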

\begin{proof}
Denoting by $\{u^1_j\}_{1 \leq j \leq m}$ and $\{u^2_j\}_{1 \leq j \leq m}$ the coefficients such that
\begin{align*}
P_m u_1 = \sum_{j=1}^{m} u^1_j w_j, \quad P_m u_2 = \sum_{j=1}^m u^2_j y_j,
\end{align*}
then, after integrating by parts, we obtain
\begin{align*}
& \inner{\bm{A} P_m \bm{u}}{P_m \bm{u}}\\
& \quad = \int_\Omega \Big ( \sum_{j=1}^m - u^1_j \Lap w_j  \Big ) \Big ( \sum_{i=1}^m u^1_i w_i \Big ) \dx + \int_\Gamma \Big ( \sum_{j=1}^m u^1_j (\pdnu w_j + K^{-1} w_j) \Big ) \Big ( \sum_{i=1}^m u^1_i w_i \Big ) \dH \\
& \qquad + \int_\Gamma \Big ( \sum_{j=1}^m u^2_j ( - \LB y_j + y_j ) \Big ) \Big ( \sum_{i=1}^m u^2_i y_i \Big ) \dH \\
& \quad = \int_\Omega \sum_{j=1}^m \lambda_j \abs{u^1_j}^2 \abs{w_j}^2 \dx + \int_\Gamma \sum_{j=1}^m \frac{\lambda_j }{K} \abs{u^1_j}^2 \abs{w_j \vert_\Gamma}^2 \dH + \int_\Gamma \sum_{j=1}^m \mu_j \abs{u^2_j}^2 \abs{y_j}^2 \dH \\
& \quad \geq \min(1,K^{-1}) \min(\lambda_1, \mu_1) \Big ( \norm{P_m u_1}_{L^2(\Omega)}^2 + \norm{P_m u_2}_{L^2(\Gamma)}^2 \Big )\\
&\quad = \min(1,K^{-1}) \min(\lambda_1, \mu_1) \norm{P_m\bm{u}}_{\bH}^2,
\end{align*}
once we employ the ordering of the eigenvalues $\{\lambda_j\}_{j \in \N}$ and $\{ \mu_j\}_{j \in \N}$.  The assertion for $\inner{\bm{A}(\bm{u} - P_m \bm{u})}{\bm{u} - P_m \bm{u}}$ is proved similarly with the observations $\lambda_m \leq \lambda_{m+1}$, $\mu_m \leq \mu_{m+1}$, and
\begin{align*}
\bm{u} - P_m \bm{u} = \Big ( \sum_{j=m+1}^{\infty} u_j^1 w_j, \sum_{j=m+1}^{\infty} u_j^2 y_j \Big ),
\end{align*}
and so we omit the details.
\end{proof}

By the generalized Poincar\'{e} inequality, it follows that
\begin{align*}
\inner{\bm{A} \bm{u}}{\bm{u}} \geq c_P \norm{u_1}_{H^1(\Omega)}^2 + \norm{u_2}_{H^1(\Gamma)}^2 \geq  c \norm{\bm{u}}_{\bV}^2
\end{align*}
for some positive constants $c_P, c$ depending only on $\Omega$ and $K$.  Let
\begin{align*}
\theta_{m} := \min(1,K^{-1}) \min(\lambda_{m}, \mu_{m}),
\end{align*}
then it holds that
\begin{align*}
\inner{\bm{A} \bm{u}}{\bm{u}}& = \frac{1}{2} \inner{\bm{A} \bm{u}}{\bm{u}} + \frac{1}{2} \inner{\bm{A} \bm{u}}{\bm{u}} \\
 & \geq \frac{c}{2} \norm{\bm{u}}_{\bV}^2 + \frac{1}{2} \Big ( \inner{\bm{A} P_m \bm{u}}{P_m \bm{u}} + \inner{\bm{A} (\bm{u} - P_m \bm{u})}{(\bm{u} - P_m \bm{u})} \Big ) \\
& \geq \frac{c}{2} \norm{\bm{u}}_{\bV}^2 + \frac{\theta_{m}}{2} \norm{\bm{u} - P_m \bm{u}}_{\bH}^2.
\end{align*}
Therefore, we arrive at
\begin{align*}
\inner{(\bm{A} + \theta_{m} P_m) \bm{u}}{\bm{u}} & \geq \frac{c}{2} \norm{\bm{u}}_{\bV}^2 + \frac{\theta_{m}}{2} \norm{\bm{u} - P_m \bm{u}}_{\bH}^2 + \theta_{m} \norm{P_m \bm{u}}_{\bH}^2 \\
& \geq \frac{c}{2} \norm{\bm{u}}_{\bV}^2 + \frac{\theta_m}{4} \norm{\bm{u}}_{\bH}^2.
\end{align*}

For fixed $(v,\psi)$ in $\bV$, and arbitrary $\bm{g} = (g_1, g_2), \bm{k} = (k_1, k_2) \in \bV$, we consider the following linearized operator $L((v,\psi)) =: L_{(v,\psi)} : \bV \to \bV'$ defined as
\begin{align}\label{defn:Lvpsi}
\notag & \inner{L_{(v,\psi)} \bm{g}}{\bm{k}}_{\bV',\bV} \\
\notag & \quad := \int_{\Omega} \nabla g_1 \cdot \nabla k_1 + f'(v + u_*) g_1 k_1 \dx + \int_{\Gamma} \surf g_2 \cdot \surf k_2 + f_{\Gamma}'(\psi + \phi_*) g_2 k_2 \dH \\
\notag & \qquad + \int_{\Gamma} K^{-1} (g_1 - h'(\psi + \phi_*) g_2)(k_1 - h'(\psi + \phi_*) k_2) \dH \\
& \qquad +  \int_\Gamma K^{-1} h''(\psi + \phi_*) (h(\psi + \phi_*) - (v + u_*))g_2 k_2  \dH.
\end{align}
Due to assumptions \eqref{ass:dom}--\eqref{ass:F}, we see from \eqref{defn:Lvpsi} that $L((v,\psi))$ is well-defined.
Besides, one observes that the domain of $L_{(v,\psi)}$ is $\bW$ and it is clear that $L_{(v,\psi)}$ is self-adjoint.
Associated to $L_{(0,0)} = L((0,0))$ is the bilinear form
\begin{align*}
 b(\bm{g}, \bm{k}) & := \int_{\Omega} \nabla g_1 \cdot \nabla k_1 + f'(u_*) g_1 k_1 \dx + \int_{\Gamma} \surf g_2 \cdot \surf k_2 + f_{\Gamma}'(\phi_*) g_2 k_2 \dH \\
 & \quad + \int_\Gamma K^{-1} (g_1 - h'(\phi_*) g_2)(k_1 - h'(\phi_*)k_2) + K^{-1} h''(\phi_*) (h(\phi_*) - u_*) g_2 k_2  \dH,
\end{align*}
which satisfies
\begin{align*}
b(\bm{g}, \bm{g}) & \geq \norm{\nabla g_1}_{L^2(\Omega)}^2  - \norm{f'(u_*)}_{L^{\infty}(\Omega)} \norm{g_1}_{L^2(\Omega)}^2 + \norm{\surf g_2}_{L^2(\Gamma)}^2 \\
& \quad - \norm{f_\Gamma'(\phi_*)}_{L^{\infty}(\Gamma)} \norm{g_2}_{L^2(\Gamma)}^2 + K^{-1} \norm{g_1 - h'(\phi_*)g_2}_{L^2(\Gamma)}^2 \\
& \quad - K^{-1} \norm{h''(\phi_*) (h(\phi_*) - u_*)}_{L^{\infty}(\Gamma)} \norm{g_2}_{L^2(\Gamma)}^2.
\end{align*}
Using the inequality
\begin{align*}
\norm{g_1 - h'(\phi_*) g_2}_{L^2(\Gamma)}^2 \geq \frac{1}{2} \norm{g_1}_{L^2(\Gamma)}^2 - \norm{h'(\phi_*) g_2}_{L^2(\Gamma)}^2,
\end{align*}
and the definition of the operator $\bm{A}$ from \eqref{op:A}, we see that
\begin{align*}
b(\bm{g}, \bm{g}) & \geq \frac{1}{2}\inner{\bm{A} \bm{g}}{\bm{g}} -  \norm{f'(u_*)}_{L^{\infty}(\Omega)} \norm{g_1}_{L^2(\Omega)}^2 - \Big ( \frac{1}{2} + K^{-1} \norm{h'(\phi_*)}_{L^{\infty}(\Gamma)}^2 \Big ) \norm{g_2}_{L^2(\Gamma)}^2 \\
& \quad - \Big ( \norm{f_\Gamma'(\phi_*)}_{L^{\infty}(\Gamma)} + K^{-1} \norm{h''(\phi_*) (h(\phi_* ) - u_*)}_{L^{\infty}(\Gamma)} \Big )\norm{g_2}_{L^2(\Gamma)}^2 \\
& \geq \frac{c}{4} \norm{\bm{g}}_{\bV}^2 + \frac{\theta_m}{4} \norm{\bm{g} - P_m \bm{g}}_{\bH}^2 - c_* \norm{\bm{g}}_{\bH}^2,
\end{align*}
for some positive constant $c_*$ depending only on the $L^{\infty}(\Gamma)$ norm of $h'(\phi_*)$, $h''(\phi_*)$, $h(\phi_*) - u_*$ and $f_\Gamma'(\phi_*)$.
Since the eigenvalues satisfy $\lambda_j \to \infty$, $\mu_j \to \infty$ as $j \to \infty$, we can choose $m$ sufficiently large so that
\begin{align}\label{defn:thetam}
\theta_m & = \min(1,K^{-1}) \min(\lambda_{m}, \mu_{m}) > 8 c_*.
\end{align}
Then we can prove the following result:
\begin{lem}\label{lem:L00}
Fix $m \in \N$ such that \eqref{defn:thetam} is valid.  For any $\bm{w} = (w_1, w_2) \in \bH$, there exists a unique solution $\bm{g} = (g_1, g_2) \in \bW$ to the abstract equation
\begin{align}\label{absequ}
\LL_{(0,0)} \bm{g}:= (L_{(0,0)} + \theta_m P_m) \bm{g} = \bm{w}.
\end{align}
Furthermore, it holds that
\begin{align*}
\norm{\bm{g}}_{\bW} \leq C \norm{\bm{w}}_{\bH}.
\end{align*}
\end{lem}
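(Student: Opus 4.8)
The plan is to read $\LL_{(0,0)}$ as (minus a finite-rank perturbation of) an elliptic operator and apply Lax--Milgram followed by elliptic regularity. First I would introduce the bilinear form representing $\LL_{(0,0)}$ on $\bV$,
\[
\tilde b(\bm g,\bm k) := b(\bm g,\bm k) + \theta_m\,(P_m\bm g,\bm k)_{\bH},\qquad \bm g,\bm k\in\bV ,
\]
so that $\inner{\LL_{(0,0)}\bm g}{\bm k}_{\bV',\bV}=\tilde b(\bm g,\bm k)$. Since $(u_*,\phi_*)\in\bW\hookrightarrow L^\infty(\Omega)\times L^\infty(\Gamma)$ and $f',f_\Gamma',h',h''$ are continuous, the coefficients $f'(u_*),f_\Gamma'(\phi_*),h'(\phi_*),h''(\phi_*)(h(\phi_*)-u_*)$ all lie in $L^\infty(\Gamma)$, and together with $\norm{\theta_m P_m\bm g}_{\bH}\le\theta_m\norm{\bm g}_{\bH}$ this shows $\tilde b$ is bounded on $\bV$. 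For coercivity I would feed the estimate established just above, namely $b(\bm g,\bm g)\ge\tfrac{c}{4}\norm{\bm g}_{\bV}^2+\tfrac{\theta_m}{4}\norm{\bm g-P_m\bm g}_{\bH}^2-c_*\norm{\bm g}_{\bH}^2$, into $\tilde b$ and use $\norm{\bm g}_{\bH}^2=\norm{P_m\bm g}_{\bH}^2+\norm{\bm g-P_m\bm g}_{\bH}^2$, obtaining $\tilde b(\bm g,\bm g)\ge\tfrac{c}{4}\norm{\bm g}_{\bV}^2+(\tfrac{\theta_m}{4}-c_*)\norm{\bm g-P_m\bm g}_{\bH}^2+(\theta_m-c_*)\norm{P_m\bm g}_{\bH}^2$, where both bracketed coefficients are positive thanks to the choice $\theta_m>8c_*$ in \eqref{defn:thetam}; in particular $\tilde b(\bm g,\bm g)\ge\tfrac{c}{4}\norm{\bm g}_{\bV}^2$. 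As $\bm k\mapsto(\bm w,\bm k)_{\bH}$ is a bounded linear functional on $\bV$ of norm $\le\norm{\bm w}_{\bH}$, the Lax--Milgram theorem yields a unique $\bm g\in\bV$ with $\tilde b(\bm g,\bm k)=(\bm w,\bm k)_{\bH}$ for all $\bm k\in\bV$ and $\norm{\bm g}_{\bV}\le C\norm{\bm w}_{\bH}$.

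Next I would bootstrap the regularity of $\bm g=(g_1,g_2)$. Testing the weak identity with $\bm k=(0,k_2)$, $k_2\in H^1(\Gamma)$, shows that on the closed manifold $\Gamma$ the surface component solves $-\LB g_2=\tilde w_2$, where $\tilde w_2:= w_2-f_\Gamma'(\phi_*)g_2+K^{-1}h'(\phi_*)\big(g_1\vert_\Gamma-h'(\phi_*)g_2\big)-K^{-1}h''(\phi_*)(h(\phi_*)-u_*)g_2-\theta_m P_m g_2$. By the trace embedding $H^1(\Omega)\hookrightarrow L^2(\Gamma)$ and the $L^\infty(\Gamma)$ bounds on the coefficients, $\tilde w_2\in L^2(\Gamma)$ with $\norm{\tilde w_2}_{L^2(\Gamma)}\le C(\norm{\bm w}_{\bH}+\norm{\bm g}_{\bV})$, so elliptic regularity on $\Gamma$ gives $g_2\in H^2(\Gamma)$ and $\norm{g_2}_{H^2(\Gamma)}\le C\norm{\bm w}_{\bH}$. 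Testing with $\bm k=(k_1,0)$, $k_1\in H^1(\Omega)$, shows $g_1$ solves the Robin problem
\[
-\Lap g_1 = w_1-f'(u_*)g_1-\theta_m P_m g_1=:\tilde w_1 \ \text{ in } \Omega,\qquad K\pdnu g_1+g_1=h'(\phi_*)g_2 \ \text{ on } \Gamma,
\]
with $\tilde w_1\in L^2(\Omega)$, $\norm{\tilde w_1}_{L^2(\Omega)}\le C(\norm{\bm w}_{\bH}+\norm{\bm g}_{\bV})$, and boundary datum $h'(\phi_*)g_2\in H^{1/2}(\Gamma)$ (because $h'(\phi_*)\in W^{1,p}(\Gamma)$ for all finite $p$, as $\surf(h'(\phi_*))=h''(\phi_*)\surf\phi_*$ with $\surf\phi_*\in H^1(\Gamma)\subset L^p(\Gamma)$ in two dimensions, while $g_2\in H^1(\Gamma)\subset L^q(\Gamma)$). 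Elliptic regularity for the Robin problem then yields $g_1\in H^2(\Omega)$ with $\norm{g_1}_{H^2(\Omega)}\le C(\norm{\tilde w_1}_{L^2(\Omega)}+\norm{h'(\phi_*)g_2}_{H^{1/2}(\Gamma)})\le C\norm{\bm w}_{\bH}$. Hence $\bm g\in\bW$, $\norm{\bm g}_{\bW}\le C\norm{\bm w}_{\bH}$, and $\LL_{(0,0)}\bm g=\bm w$ holds as an identity in $\bH\subset\bV'$. Uniqueness within $\bW$ is immediate: any solution in $\bW$ is in particular a $\bV$-weak solution, which is unique by Lax--Milgram.

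The hard part will be the regularity bootstrap rather than the solvability: one has to track the coupling between the two elliptic equations — the trace $g_1\vert_\Gamma$ entering $\tilde w_2$ and the datum $h'(\phi_*)g_2$ entering the Robin condition for $g_1$ — and verify, using $(u_*,\phi_*)\in\bW\subset L^\infty$ together with assumptions \eqref{ass:h}--\eqref{ass:F}, that the forcing terms land in $L^2$ and the boundary datum in $H^{1/2}(\Gamma)$, all with norms controlled by $\norm{\bm w}_{\bH}$. The coercivity and boundedness of $\tilde b$, by contrast, are essentially already contained in the computation preceding the statement, so the Lax--Milgram step is routine once $\theta_m$ is fixed as in \eqref{defn:thetam}.
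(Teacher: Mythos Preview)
Your proposal is correct and follows essentially the same route as the paper: establish boundedness and coercivity of the bilinear form $\tilde b$ on $\bV$ (the coercivity computation is the paper's, just reorganized via the orthogonal decomposition $\norm{\bm g}_{\bH}^2=\norm{P_m\bm g}_{\bH}^2+\norm{\bm g-P_m\bm g}_{\bH}^2$), apply Lax--Milgram for the unique $\bV$-solution with $\norm{\bm g}_{\bV}\le C\norm{\bm w}_{\bH}$, and then bootstrap to $\bW$ by applying elliptic regularity first to the surface equation for $g_2$ (using $g_1\vert_\Gamma\in L^2(\Gamma)$) and then to the Robin problem for $g_1$ (using $h'(\phi_*)g_2\in H^{1/2}(\Gamma)$). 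Your justification for $h'(\phi_*)g_2\in H^{1/2}(\Gamma)$ is slightly more explicit than the paper's, but the overall structure is identical.
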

\begin{proof}
Thanks to \eqref{defn:thetam}, we can deduce that
\begin{equation}\label{coer}
\begin{aligned}
&\inner{(L_{(0,0)} + \theta_{m} P_m) \bm{g}}{\bm{g}}_{\bV',\bV}\\
&\quad = b(\bm{g}, \bm{g}) + \theta_m \inner{ P_m \bm{g}}{\bm{g}}_{\bV',\bV} \\
&\quad  \geq \frac{c}{4} \norm{\bm{g}}_{\bV}^2 + \frac{\theta_m}{4} \norm{\bm{g} - P_m \bm{g}}_{\bH}^2 + \frac{\theta_m}{4} \norm{P_m \bm{g}}_{\bH}^2 - c_* \norm{\bm{g}}_{\bH}^2 \\
&\quad \geq \frac{c}{4} \norm{\bm{g}}_{\bV}^2 + \Big ( \frac{\theta_m}{8} - c_* \Big ) \norm{\bm{g}}_{\bH}^2\\
&\quad \geq \frac{c}{4} \norm{\bm{g}}_{\bV}^2.
\end{aligned}
\end{equation}
From \eqref{coer}, the operator $\LL_{(0,0)}$ is coercive on $\bV$.  Furthermore, it is clear that $\LL_{(0,0)}$ is bounded on $\bV$, and so the unique solvability of \eqref{absequ} follows directly from the Lax--Milgram theorem. Moreover, from the coercivity of $\LL_{(0,0)}$, we obtain
\begin{align*}
\frac{c}{4}\norm{\bm{g}}_{\bV}^2 \leq \inner{\LL_{(0,0)} \bm{g}}{\bm{g}}  = (\bm{w}, \bm{g})_{\bH} \leq \norm{\bm{w}}_{\bH} \norm{\bm{g}}_{\bH},
\end{align*}
leading to the $\bV$-stability estimate
\begin{align*}
\norm{\bm{g}}_{\bV} \leq C \norm{\bm{w}}_{\bH}.
\end{align*}
For regularity in $\bW$, we observe that $\bm{g} = (g_1, g_2)$ is a weak solution to the linear system
\begin{subequations}
\begin{alignat}{3}
- \Lap g_1 + f'(u_*) g_1 + \theta_m P_m g_1 = w_1 &\quad  \text{ in } \Omega, \label{lin:1} \\
\pdnu g_1 + K^{-1} g_1 = K^{-1} h'(\phi_*) g_2 &\quad  \text{ on } \Gamma, \label{lin:2} \\
\notag - \LB g_2 + f_\Gamma'(\phi_*) g_2 + \theta_m P_m g_2 + K^{-1} (h'(\phi_*))^2 g_2 & \\
 - K^{-1}  h'(\phi_*) g_1 + K^{-1} h''(\phi_*)(h(\phi_*) - u_*) g_2 = 0 &\quad  \text{ on } \Gamma. \label{lin:3}
\end{alignat}
\end{subequations}
Applying elliptic regularity for the third equation \eqref{lin:3} yields
\begin{align*}
\norm{g_2}_{H^2(\Gamma)} & \leq C \norm{g_2}_{H^1(\Gamma)} + C \Big(\norm{f_\Gamma'(\phi_*)}_{L^{\infty}(\Gamma)} + \theta_m + K^{-1} \norm{h'(\phi_*)}_{L^{\infty}(\Gamma)}^2 \Big )\norm{g_2}_{L^2(\Gamma)} \\
& \quad +  C\norm{h'(\phi_*)}_{L^{\infty}(\Gamma)} \norm{g_1}_{L^2(\Gamma)} + C \norm{h''(\phi_*)(h(\phi_*) - u_*)}_{L^{\infty}(\Gamma)} \norm{g_2}_{L^2(\Gamma)} \\
& \leq C \norm{\bm{g}}_{\bV} \leq C \norm{\bm{w}}_{\bH}.
\end{align*}
Then, the regularity for $\phi_*$ and $g_2$ imply that $h'(\phi_*) g_2 \in H^{\frac{1}{2}}(\Gamma)$, and so by elliptic regularity for the system \eqref{lin:1}--\eqref{lin:2}, we obtain
\begin{align*}
\norm{g_1}_{H^2(\Omega)} & \leq C \norm{g_1}_{H^1(\Omega)} + C \Big ( \norm{f'(u_*)}_{L^{\infty}(\Omega)} + \theta_m \Big )\norm{g_1}_{L^2(\Omega)} + C \norm{h'(\phi_*)g_2}_{H^{\frac{1}{2}}(\Gamma)} \\
& \leq C \norm{g_1}_{H^1(\Omega)} + C \norm{\phi_*}_{H^2(\Gamma)} \norm{g_2}_{H^2(\Gamma)} \leq C \norm{\bm{w}}_{\bH}.
\end{align*}
The proof is complete.
\end{proof}

\paragraph{Step 2. Analysis of a certain nonlinear operator.}
For sufficiently large $m$ chosen in Step 1, we set $\Pi_m := \theta_m P_m$.
For any $(v,\psi), (w,\xi) \in \bV$, consider the nonlinear operator $\mathcal{N}: \bV \to \bV'$ defined as
\begin{align}\label{defn:NN}
\inner{\mathcal{N}(v,\psi)}{(w,\xi)}_{\bV',\bV} = (\Pi_m (v,\psi), (w,\xi))_{\bH} + \inner{\M(v,\psi)}{(w,\xi)}_{\bV}.
\end{align}
Besides, for given $(v,\psi) \in \bV$, we define the linear operator $\LL_{(v,\psi)} : \bV \to \bV'$ as
\begin{align}\label{defn:LL}
\LL_{(v,\psi)} := \Pi_m + L_{(v,\psi)} = \theta_m P_m + L_{(v,\psi)},
\end{align}
where $L_{(v,\psi)}$ is given in \eqref{defn:Lvpsi}. Then we have

\begin{lem}
For any $(v,\psi) \in \bV$, the operator $\mathcal{N}$ is Fr\'{e}chet differentiable with derivative $\der \mathcal{N}(v,\psi) = \LL_{(v,\psi)}$, i.e.,
\begin{align*}
\frac{\norm{\mathcal{N}(v+g_1, \psi + g_2) - \mathcal{N}(v, \psi) - \LL_{(v,\psi)}(g_1, g_2)}_{\bV'}}{\norm{(g_1, g_2)}_{\bV}} \to 0 \text{ as } \norm{(g_1, g_2)}_{\bV} \to 0.
\end{align*}
\end{lem}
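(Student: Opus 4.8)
The plan is to reduce the statement to controlling the Taylor remainders of the Nemytskii-type terms in $\M$. First I would note that $\Pi_m=\theta_m P_m$ is bounded and linear, so its contributions to $\mathcal{N}$ in \eqref{defn:NN} and to $\LL_{(v,\psi)}=\Pi_m+L_{(v,\psi)}$ cancel exactly in the numerator of the difference quotient. Hence it suffices to prove that $\M:\bV\to\bV'$, $\M(v,\psi)=M(u_*+v,\phi_*+\psi)$, is Fr\'echet differentiable at $(v,\psi)$ with derivative $L_{(v,\psi)}$, i.e.
\[
\sup_{\|(w,\xi)\|_{\bV}\le 1}\left|\inner{\M(v+g_1,\psi+g_2)-\M(v,\psi)-L_{(v,\psi)}(g_1,g_2)}{(w,\xi)}_{\bV',\bV}\right|=o\big(\|(g_1,g_2)\|_{\bV}\big).
\]

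Next I would set $u:=u_*+v$, $\phi:=\phi_*+\psi$ and, using the explicit formulas \eqref{defn:MM} and \eqref{defn:Lvpsi}, expand the functional inside the supremum term by term. The two gradient pairings and the bilinear boundary term $K^{-1}\int_\Gamma u\vert_\Gamma w\vert_\Gamma\dH$ are bounded and bilinear in $(u,\phi)$ and $(w,\xi)$, so they cancel identically against the corresponding linear parts of $L_{(v,\psi)}$. What remains is $\int_\Omega\big(f(u+g_1)-f(u)-f'(u)g_1\big)w\dx$, $\int_\Gamma\big(f_\Gamma(\phi+g_2)-f_\Gamma(\phi)-f_\Gamma'(\phi)g_2\big)\xi\dH$, together with --- after expanding $K^{-1}\int_\Gamma(u-h(\phi))(w-h'(\phi)\xi)\dH$ into monomials --- boundary integrals of the Taylor remainders of $s\mapsto h(s)$, $s\mapsto h'(s)$ and $s\mapsto h(s)h'(s)$ about $\phi$, each multiplied by a bounded trace of $w$, $\xi$ or $u$. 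A short bookkeeping check shows that the first-order parts of these expansions reassemble precisely into the Robin part of $L_{(v,\psi)}$, namely $K^{-1}\int_\Gamma(g_1-h'(\phi)g_2)(w-h'(\phi)\xi)\dH+K^{-1}\int_\Gamma h''(\phi)(h(\phi)-u)g_2\xi\dH$; this routine identity is what motivates the definition of $L_{(v,\psi)}$ in the first place.

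Then I would estimate each Taylor remainder in $\bV'$ by duality. For the bulk term I would use $f(u+g_1)-f(u)-f'(u)g_1=g_1^2\int_0^1(1-s)f''(u+sg_1)\ds$, the growth bound $|f''(s)|\le c_0(1+|s|^p)$ with $p\in[0,3)$ from \eqref{ass:F}, and $H^1(\Omega)\hookrightarrow L^6(\Omega)$: since $2p<6$, H\"older's inequality gives $\|f''(u+sg_1)g_1^2\|_{L^{6/5}(\Omega)}\le C\big(1+\|u\|_{H^1(\Omega)}^p+\|g_1\|_{H^1(\Omega)}^p\big)\|g_1\|_{H^1(\Omega)}^2$ uniformly in $s\in[0,1]$, and $L^{6/5}(\Omega)\hookrightarrow(H^1(\Omega))'$ turns this into a bound of the required form. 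The surface reaction term is handled identically using $|f_\Gamma''(s)|\le c_0(1+|s|^q)$ and the two-dimensional embeddings $H^1(\Gamma)\hookrightarrow L^r(\Gamma)$, $r<\infty$. The Robin remainders follow the same pattern: the pieces quadratic in $g_2$ use the second-order Taylor remainders of $h$, $h'$, $hh'$ together with $h''\in L^\infty(\R)$, the polynomial bound on $h'''$ from \eqref{ass:h} (hence on $(hh')''$), the embeddings $H^1(\Gamma)\hookrightarrow L^r(\Gamma)$ and the trace embedding $H^1(\Omega)\hookrightarrow L^4(\Gamma)$ applied to $w\vert_\Gamma$ and $u\vert_\Gamma$, while the single mixed term $K^{-1}\int_\Gamma g_1\big(h'(\phi+g_2)-h'(\phi)\big)\xi\dH$ uses the Lipschitz continuity of $h'$ and H\"older's inequality in $L^3(\Gamma)$. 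Collecting, every contribution is bounded by $C(v,\psi)\|(g_1,g_2)\|_{\bV}^2$ whenever $\|(g_1,g_2)\|_{\bV}\le 1$, hence is $o\big(\|(g_1,g_2)\|_{\bV}\big)$, which finishes the proof.

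I expect the bulk reaction term to be the only genuine obstacle: one must verify that its Taylor remainder really lies in $(H^1(\Omega))'$ --- this is exactly where the restriction $p\in[0,3)$ in \eqref{ass:F} is used --- and that the estimate is uniform over the unit ball of test functions $(w,\xi)$ and over the Taylor parameter $s\in[0,1]$; the latter follows once one notes that $u+sg_1$ stays in a bounded subset of $H^1(\Omega)$ as soon as $\|g_1\|_{H^1(\Omega)}\le 1$. The remaining estimates are routine applications of H\"older's inequality and the Sobolev and trace embeddings.
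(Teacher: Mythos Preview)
Your proposal is correct and follows essentially the same approach as the paper: both reduce to the Fr\'echet differentiability of $\M$ by cancelling the linear $\Pi_m$ part, expand the pairing into the Taylor remainders of $f$, $f_\Gamma$ and the Robin boundary terms, and then estimate each remainder via the growth bounds in \eqref{ass:h}--\eqref{ass:F} together with $H^1(\Omega)\hookrightarrow L^6(\Omega)$ and $H^1(\Gamma)\hookrightarrow L^r(\Gamma)$. The only cosmetic differences are that you package the bulk estimate through the duality $L^{6/5}(\Omega)\hookrightarrow (H^1(\Omega))'$ whereas the paper pairs directly with the test function in $L^6$, and you organise the Robin remainders as Taylor remainders of $h$, $h'$ and $hh'$ while the paper writes out the individual pieces; the resulting bound $C\|(g_1,g_2)\|_{\bV}^2$ is the same.
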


\begin{proof}
For arbitrary $(g_1, g_2), (k_1, k_2) \in \bV$, we compute that
\begin{align*}
& \inner{\mathcal{N}(v+ g_1, \psi +  g_2) - \mathcal{N}(v,\psi) - \mathcal{L}_{(v,\psi)}(g_1, g_2)}{(k_1, k_2)}_{\bV',\bV} \\
& \quad = \inner{\M(v+ g_1, \psi + g_2) - \M(v,\psi) - L_{(v,\psi)}(g_1, g_2)}{(k_1, k_2)}_{\bV',\bV} \\
& \quad = \int_\Omega [f(u+  g_1) - f(u) -  f'(u) g_1] k_1 \dx  + \int_\Gamma [f_\Gamma(\phi + g_2) - f_\Gamma (\phi) - f_\Gamma'(\phi) g_2] k_2 \dH \\
& \qquad - \int_\Gamma K^{-1} [ h(\phi + g_2) - h(\phi) - h'(\phi) g_2] k_1 + K^{-1} u k_2  [ h'(\phi + g_2) - h'(\phi) + h''(\phi) g_2] \dH \\
& \qquad + \int_\Gamma K^{-1} h'(\phi + g_2) k_2 [ h(\phi+ g_2) - h(\phi) - h'(\phi) g_2] \dH \\
& \qquad + \int_\Gamma K^{-1}  h(\phi) k_2 [h'(\phi+ g_2) - h'(\phi) - h''(\phi) g_2] \dH \\
& \qquad + \int_\Gamma K^{-1} g_2 k_2 [h'(\phi) h'(\phi+ g_2) - (h'(\phi))^2 ]-  K^{-1} g_1 k_2 [h'(\phi + g_2) - h'(\phi)] \dH.
\end{align*}
By the Newton--Leibniz formula
\begin{align*}
f(u+ g_1) - f(u) - f'(u) g_1 = \int_0^1 \int_0^1 f''(sz (u + g_1) + (1-sz) u) g_1^2 \ds \dz,
\end{align*}
and the growth assumption \eqref{ass:F} for $f''$, we obtain
\begin{align*}
& \abs{\int_\Omega [f(u+g_1) - f(u) -f'(u) g_1] k_1 \dx} \\
& \quad \leq \int_0^1 \int_0^1 \norm{f''(sz(u+g_1) + (1-sz) u)}_{L^2(\Omega)} \ds \dz \norm{g_1}_{L^6(\Omega)}^2 \norm{k_1}_{L^6(\Omega)} \\
& \quad \leq C \Big ( 1 + \norm{u+g_1}_{L^6(\Omega)}^{p} + \norm{u}_{L^6(\Omega)}^{p} \Big ) \norm{g_1}_{H^1(\Omega)}^2 \norm{k_1}_{H^1(\Omega)}.
\end{align*}
Arguing similarly for the other terms with assumptions \eqref{ass:h} and \eqref{ass:F} in mind, we have
\begin{align*}
& \abs{\inner{\mathcal{N}(v+g_1, \psi+g_2) - \mathcal{N}(v,\psi) - \mathcal{L}_{(v,\psi)}(g_1,g_2)}{(k_1,k_2)}_{\bV',\bV}} \\
& \quad \leq C  \Big ( 1 + \norm{g_1}_{H^1(\Omega)}^{p} + \norm{u}_{H^1(\Omega)}^{p} \Big ) \norm{g_1}_{H^1(\Omega)}^2 \norm{k_1}_{H^1(\Omega)} \\
& \qquad + C\Big ( 1 + \norm{g_2}_{H^1(\Gamma)}^{q} + \norm{\phi}_{H^1(\Gamma)}^{q} \Big ) \norm{g_2}_{H^1(\Gamma)}^2 \norm{k_2}_{H^1(\Gamma)} \\
& \qquad + C\norm{h''}_{L^{\infty}(\R)} \norm{g_2}_{H^1(\Gamma)}^2 \norm{k_1}_{L^2(\Gamma)} \\
& \qquad + C\Big ( 1 + \norm{g_2}_{H^1(\Gamma)}^{q} + \norm{\phi}_{H^1(\Gamma)}^{q} \Big )\norm{u}_{L^2(\Gamma)} \norm{g_2}_{H^1(\Gamma)}^2 \norm{k_2}_{H^1(\Gamma)} \\
& \qquad + C\norm{h' h''}_{L^{\infty}(\R)} \norm{g_2}_{H^1(\Gamma)}^2 \norm{k_2}_{H^1(\Gamma)}^2 \\
& \qquad + C\Big ( 1 + \norm{g_2}_{H^1(\Gamma)}^{q} + \norm{\phi}_{H^1(\Gamma)}^{q} \Big ) \norm{h(\phi)}_{L^2(\Gamma)} \norm{g_2}_{H^1(\Gamma)}^2 \norm{k_2}_{H^1(\Gamma)} \\
& \qquad + C\norm{h' h''}_{L^{\infty}(\R)} \norm{g_2}_{H^1(\Gamma)}^2 \norm{k_2}_{H^1(\Gamma)} + C\norm{h''}_{L^{\infty}(\R)} \norm{g_1}_{L^2(\Gamma)} \norm{g_2}_{H^1(\Gamma)} \norm{k_2}_{H^1(\Gamma)} \\
& \quad \leq C \Big ( 1 + \norm{g_1}_{H^1(\Omega)}^{p} + \norm{u}_{H^1(\Omega)}^{\max(p,2)} + \norm{g_2}_{H^1(\Gamma)}^{2q} + \norm{\phi}_{H^1(\Gamma)}^{2q} \Big ) \norm{\bm{g}}_{\bV}^2 \norm{\bm{k}}_{\bV},
\end{align*}
and hence
\begin{align*}
& \frac{\norm{\mathcal{N}(v+g_1, \psi + g_2) - \mathcal{N}(v,\psi) - \mathcal{L}_{(v,\psi)}(g_1, g_2)}_{\bV'}}{\norm{\bm{g}}_{\bV}}\\
& \quad \leq C \Big ( 1 + \norm{g_1}_{H^1(\Omega)}^{p} + \norm{u}_{H^1(\Omega)}^{\max(p,2)} + \norm{g_2}_{H^1(\Gamma)}^{2q} + \norm{\phi}_{H^1(\Gamma)}^{2q} \Big ) \norm{\bm{g}}_{\bV} \to 0 \text{ as } \norm{\bm{g}}_{\bV} \to 0,
\end{align*}
which implies the desired assertion.
\end{proof}

We can deduce from the analyticity of $F$, $F_\Gamma$ and $h$ that the mappings
\begin{align*}
L^\infty(\Omega) \ni u & \mapsto f(u) \in L^\infty(\Omega), \\
L^\infty(\Gamma) \ni \phi & \mapsto f_\Gamma(\phi) \in L^\infty(\Gamma), \\
L^\infty(\Gamma) \ni \phi & \mapsto h(\phi) \in L^\infty(\Gamma)
\end{align*}
are analytic (in the sense of \cite[Definition 2.4]{HJ}).  Then, by the embedding $\bW \subset L^{\infty}(\Omega) \times L^{\infty}(\Gamma)$, it follows that the restricted operator $\mathcal{N}_{W} := \mathcal{N} \vert_{\bW} : \bW \to \bH$ is also analytic.  Furthermore, since $\der \mathcal{N}_{W}((0,0)) = \LL_{(0,0)}$ is a bijection by Lemma \ref{lem:L00}, we can invoke the analytic implicit function theorem (see for example \cite[Corollary 4.37, p.~172]{Zeidler}) to deduce the existence of small neighbourhoods around the origins, $\bm{U}_1(0) \subset \bW$ and $\bm{U}_2(0) \subset \bH$, as well as an analytic and bijective inverse $\Psi := \mathcal{N}_{W}^{-1}: \bm{U}_2(0) \to \bm{U}_1(0)$ such that
\begin{align}
\mathcal{N}_W(\Psi(\bm{g})) = \bm{g} \text{ for all } \bm{g} \in \bm{U}_2(0), \quad \Psi(\mathcal{N}_{W}(\bm{h})) = \bm{h} \text{ for all } \bm{h} \in \bm{U}_1(0)
\end{align}
and
\begin{subequations}\label{Lip:N:str}
\begin{alignat}{2}
\norm{\Psi(\bm{g}_1) - \Psi(\bm{g}_2)}_{\bW} & \leq C \norm{\bm{g}_1 - \bm{g}_{2}}_{\bH}\quad  \text{ for all } \bm{g}_1, \bm{g}_2 \in \bm{U}_2(0), \\
\norm{\mathcal{N}_{W}(\bm{h}_1) - \mathcal{N}_{W}(\bm{h}_2)}_{\bH} & \leq C \norm{\bm{h}_1 - \bm{h}_2}_{\bW}\quad  \text{ for all } \bm{h}_1, \bm{h}_2 \in \bm{U}_1(0).
\end{alignat}
\end{subequations}
On the other hand, since $\der \mathcal{N}((0,0)) = \LL_{(0,0)}$ is a bijection, by the classical local inversion theorem (see for example \cite[Theorem 4.F, p.~172]{Zeidler}), the operator $\mathcal{N}: \bV \to \bV'$ is a $C^1$-diffeomorphism near $(0,0)$.  This assures the existence of neighbourhoods $\widehat{\bm{U}}_1(0) \subset \bV$ and $\widehat{\bm{U}}_2(0) \subset \bV'$ such that
\begin{subequations}
\begin{alignat}{2}
\norm{\mathcal{N}^{-1}(\bm{g}_1) - \mathcal{N}^{-1}(\bm{g}_2)}_{\bV} & \leq C \norm{\bm{g}_1 - \bm{g}_2}_{\bV'}\quad \text{ for all } \bm{g}_1, \bm{g}_2 \in \widehat{\bm{U}}_2(0), \label{Lip:Nin} \\
\norm{\mathcal{N}(\bm{h}_1) - \mathcal{N}(\bm{h}_2)}_{\bV'} & \leq C \norm{\bm{h}_1 - \bm{h}_2}_{\bV}\quad \text{ for all } \bm{h}_1, \bm{h}_2 \in \widehat{\bm{U}}_1(0). \label{Lip:N}
\end{alignat}
\end{subequations}
In particular, in the intersection $\widehat{\bm{U}}_2(0) \cap \bm{U}_2(0)$ we have the identification $\Psi = \mathcal{N}^{-1}$.

\paragraph{Step 3. Derivation of the {\L}ojasiewicz--Simon inequality.}
We now define a function $\J: \R^m \times \R^m \to \R$ by
\begin{align}\label{defn:JJ}
\J(\bm{\xi}, \bm{\zeta}) = \E \Big ( \Psi \Big (\sum_{j=1}^m \xi_j w_j,\, \sum_{j=1}^m \zeta_j  y_j \Big ) \Big ),
\end{align}
where $\{ w_j \}_{j \in \N}$ and $\{y_j \}_{j \in \N}$ are the basis functions introduced in Step 1, and $m$ is the index such that \eqref{coer} holds.  For $\abs{\bm{\xi}}$ and $\abs{\bm{\zeta}}$ sufficiently small, it holds that
\begin{align*}
\Big (\sum_{j=1}^m \xi_j w_j, \sum_{j=1}^m \zeta_j y_j \Big ) \in \widehat{\bm{U}}_2(0) \cap \bm{U}_2(0),
\end{align*}
over which the mapping $\Psi$ is analytic. Together with the analyticity of $\E$, we infer that $\J$ is analytic with respect to $\bm{\xi}$ and $\bm{\zeta}$.  Then, applying the classical {\L}ojasiewicz inequality (see for instance \cite[Proposition 2.3]{HJ}) there exists $\sigma > 0$ and $0 < \mu \leq  \frac{1}{2}$ such that for all  $(\bm{\xi}, \bm{\zeta}) \in \R^{m} \times \R^m$ with $|(\bm{\xi}, \bm{\zeta})|_{\R^{2m}} < \sigma$, it holds
\begin{align}\label{Loj}
\abs{\nabla \J(\bm{\xi}, \bm{\zeta})}_{\R^{2m}} \geq  \abs{\J(\bm{\xi}, \bm{\zeta}) - \J(\bm{0}, \bm{0})}^{1-\mu}.
\end{align}

Next, we consider perturbations $(v,\psi)$ satisfying
\begin{align*}
(v,\psi) \in \widehat{\bm{U}}_1(0) \subset \bV, \quad \Pi_m (v,\psi) = \Big (\sum_{j=1}^m \xi_j w_j, \sum_{j=1}^m \zeta_j y_j  \Big ) \in \widehat{\bm{U}}_2(0) \cap \bm{U}_2(0)
\end{align*}
for some vectors $\bm{\xi}, \bm{\zeta} \in \R^m$, where we recall $\Pi_m = \theta_m P_m$ with $\theta_m$ defined in \eqref{defn:thetam} and $P_m$ is the orthogonal projection from $\bH$ to the product finite-dimensional subspace $W_m \times Y_m$ spanned by the first $m$ eigenfunctions in $\{w_i\}_{i \in \N}$ and $\{y_i\}_{i \in \N}$.  Then, from the relation $\mathcal{N}( \Psi(\Pi_m(v,\psi))) = \Pi_m(v,\psi)$ we obtain that
\begin{align*}
\der \Psi(\Pi_m(v,\psi)) = (\der \mathcal{N}( \Psi(\Pi_m(v,\psi))))^{-1} = \LL_{\Psi(\Pi_m(v,\psi))}^{-1},
\end{align*}
once we recall the Fr\'{e}chet derivative of $\mathcal{N}$ is the operator $\LL_{(v,\psi)}$ defined in \eqref{defn:LL}.
Hence, for the gradient appearing on the left-hand side of the {\L}ojasiewicz inequality \eqref{Loj}, with a short calculation we obtain
\begin{subequations}\label{pdJ}
\begin{alignat}{2}
\frac{\pd \J(\bm{\xi}, \bm{\zeta})}{\pd \xi_i} & = \inner{\M(\Psi(\Pi_m(v,\psi)))}{[\der \Psi(\Pi_m(v,\psi))](w_i,0)}_{\bV',\bV}, \label{pdJb} \\
\frac{\pd \J(\bm{\xi}, \bm{\zeta})}{\pd \zeta_i} & = \inner{\M(\Psi(\Pi_m(v,\psi)))}{[\der \Psi(\Pi_m(v,\psi))](0, y_i)}_{\bV',\bV}, \label{pdJs}
\end{alignat}
\end{subequations}
where for $(b_1,b_2) \in \bV'$, the pair $[\der \Psi(\Pi_m(v,\psi))](b_1,b_2) = \LL_{\Psi(\Pi_m(v,\psi))}^{-1}(b_1,b_2) = (g_1,g_2) \in \bV$ satisfies
\begin{align*}
& \int_\Omega \nabla g_1 \cdot \nabla k_1 + f'(\Psi_1 + u_*) g_1 k_1 \dx + \int_\Gamma \surf g_2 \cdot \surf k_2 + f_\Gamma'(\Psi_2  + \phi_*) g_2 k_2 \dH \\
& \quad + \int_\Gamma K^{-1}(g_1 - h'(\Psi_2 + \phi_*) g_2)(k_1 - h'(\Psi_2 + \phi_*) k_2) \dH \\
& \quad + \int_\Gamma K^{-1} h''(\Psi_2 + \phi_*)( h(\Psi_2 + \phi_*) - (\Psi_1 + u_*)) g_2 k_2 \dH \\
& = \inner{(b_1, b_2)}{(k_1, k_2)}_{\bV',\bV} \quad \text{ for all } (k_1, k_2) \in \bV,
\end{align*}
and $(\Psi_1, \Psi_2)$ are the bulk and surface components of $\Psi(\Pi_m(v,\psi))$, respectively.  In particular, $[\der \Psi(\Pi_m(v,\psi))](w_i, 0)$ and $[\der \Psi(\Pi_m(v,\psi))](0,y_i)$ both belong to $\bV$, and so the right-hand sides of \eqref{pdJb} and \eqref{pdJs} are well-defined.  From the analyticity of $\Psi$, and the fact that $(w_i, y_i) \in \bW$, we have
\begin{align*}
\norm{[\der \Psi(\Pi_m(v,\psi))](w_i, 0)}_{\bV} \leq C, \quad \norm{[\der \Psi(\Pi_m(v,\psi))](0,y_i)}_{\bV} \leq C,
\end{align*}
and so from \eqref{pdJb} and \eqref{pdJs} we obtain
\begin{equation}\label{grad1}
\begin{aligned}
\abs{\nabla \J(\bm{\xi}, \bm{\zeta})}_{\R^{2m}} & \leq C \norm{\M(\Psi(\Pi_m(v,\psi)))}_{\bV'} \\
& \leq C \norm{\M(\Psi(\Pi_m(v,\psi))) - \M(v,\psi)}_{\bV'} + C\norm{\M(v,\psi)}_{\bV'}.
\end{aligned}
\end{equation}
Recalling the definition \eqref{defn:NN}, we infer that $(v,\psi) = \mathcal{N}^{-1}(\M(v,\psi) + \Pi_m(v,\psi))$.  Then, by invoking the estimates \eqref{Lip:Nin} and \eqref{Lip:N}, we arrive at
\begin{equation}\label{grad2}
\begin{aligned}
& \norm{\M(\Psi(\Pi_m(v,\psi))) - \M(v,\psi)}_{\bV'} \\
& \quad \leq \norm{\mathcal{N}(\Psi(\Pi_m(v,\psi))) - \mathcal{N}(v,\psi)}_{\bV'} + \norm{\Pi_m(\Psi(\Pi_m(v,\psi)) - (v,\psi))}_{\bV'} \\
& \quad \leq C \norm{\Psi(\Pi_m(v,\psi)) - (v,\psi)}_{\bV} \\
& \quad = C \norm{\mathcal{N}^{-1}(\Pi_m(v,\psi)) - \mathcal{N}^{-1}(\M(v,\psi) + \Pi_m(v,\psi))}_{\bV} \\
& \quad \leq C \norm{\M(v,\psi)}_{\bV'}.
\end{aligned}
\end{equation}
Combining \eqref{grad1} and \eqref{grad2} leads to
\begin{align}\label{grad3}
\abs{\nabla \J(\bm{\xi}, \bm{\zeta})}_{\R^{2m}} \leq C \norm{\M(v,\psi)}_{\bV'}.
\end{align}
Meanwhile, for the left-hand side of \eqref{Loj} we observe that
\begin{align*}
\J(\bm{0}, \bm{0}) = \E(\Psi(\bm{0},\bm{0})) = \E(0,0) = E(u_*, \phi_*),
\end{align*}
and so from \eqref{Loj} and \eqref{grad3} we infer that
\begin{equation}\label{grad4}
\begin{aligned}
C \norm{\M(v,\psi)}_{\bV'} & \geq  \abs{\nabla \J(\bm{\xi}, \bm{\zeta})}_{\R^{2m}} \geq  \abs{\J(\bm{\xi}, \bm{\zeta}) - E(u_*, \phi_*)}^{1-\mu} \\
& = \abs{\J(\bm{\xi}, \bm{\zeta}) - \E(v,\psi) + \E(v,\psi) - \E(0, 0)}^{1-\mu} \\
& \geq \frac{1}{2} \abs{\E(v,\psi) - \E(0,0)}^{1-\mu} - C\abs{\J(\bm{\xi}, \bm{\zeta}) - \E(v, \psi)}^{1-\mu}.
\end{aligned}
\end{equation}
Hence, to obtain the desired inequality \eqref{LS} it suffices to control the second term on the right-hand side of \eqref{grad4}.
Employing the Newton--Leibniz formula, we have
\begin{align*}
& \abs{\J(\bm{\xi}, \bm{\zeta}) - \E(v,\psi)} = \abs{\E(\Psi(\Pi_m(v,\psi))) - \E(v,\psi)} \\
& \quad = \abs{\int_0^1 \frac{d}{dt} \E( (v,\psi) + t ( \Psi(\Pi_m(v,\psi)) - (v,\psi)) \dt} \\
& \quad = \abs{\int_0^1 \inner{\M((v,\psi)+ t (\Psi(\Pi_m(v,\psi)) - (v,\psi)))}{\Psi(\Pi_m(v,\psi)) - (v,\psi)}_{\bV} \dt} \\
& \quad \leq \norm{\Psi(\Pi_m(v,\psi)) - (v,\psi)}_{\bV} \int_0^1 \norm{\M((v,\psi) + t (\Psi(\Pi_m(v,\psi)) - (v,\psi)))}_{\bV'} \dt \\
& \quad \leq \norm{\Psi(\Pi_m(v,\psi)) - (v,\psi)}_{\bV}  \\
& \qquad \times  \int_0^1 \norm{\M((v,\psi) + t (\Psi(\Pi_m(v,\psi)) - (v,\psi))) - \M(v,\psi)}_{\bV'} + \norm{\M(v,\psi)}_{\bV'} \dt.
\end{align*}
Employing a similar argument to the derivation of \eqref{grad2}, we see that
\begin{align*}
\norm{\Psi(\Pi_m(v,\psi)) - (v,\psi)}_{\bV} & = \norm{\mathcal{N}^{-1}(\Pi_m(v,\psi)) - \mathcal{N}^{-1}(\M(v,\psi) + \Pi_m(v,\psi))}_{\bV} \\
& \leq C \norm{\M(v,\psi)}_{\bV'},
\end{align*}
and
\begin{align*}
& \norm{\M((v,\psi) + t (\Psi(\Pi_m(v,\psi)) - (v,\psi))) - \M(v,\psi)}_{\bV'} \\
& \quad \leq C t \norm{\Psi(\Pi_m(v,\psi)) - (v,\psi)}_{\bV} \leq C t \norm{\M(v,\psi)}_{\bV'}.
\end{align*}
Hence, we get
\begin{align}\label{grad5}
\abs{\J(\bm{\xi}, \bm{\zeta}) - \E(v,\psi)} \leq C \norm{\M(v,\psi)}_{\bV'}^2,
\end{align}
and from \eqref{grad4} this leads to
\begin{align*}
\abs{\E(v,\psi) - \E(0,0)}^{1-\mu} \leq C \norm{\M(v,\psi)}_{\bV'} \Big (1 + \norm{\M(v,\psi)}^{2(1-\mu)-1}_{\bV'} \Big ).
\end{align*}
Since $\mu \in (0,\frac{1}{2}]$ and $2(1-\mu)-1 \geq 0$, we can find a positive constant $\beta_0 < \sigma$ such that for $\norm{(v,\psi)}_{\bV} < \beta_0$,
\begin{align*}
\norm{\M(v,\psi)}^{2(1-\mu)-1}_{\bV'} \leq 1,
\end{align*}
which implies that
\begin{align*}
\frac{1}{2C}\abs{\E(v,\psi) - \E(0,0)}^{1-\mu} \leq \norm{\M(v,\psi)}_{\bV'} \text{ for all } \norm{(v,\psi)}_{\bV} < \beta_0.
\end{align*}
Let $\eps \in (0,\mu)$ be an exponent and $\beta < \beta_0$ be a positive constant such that
\begin{align*}
\frac{1}{2C} \abs{\E(v,\psi) - \E(0,0)}^{-\eps} \geq 1 \text{ for all } \norm{(v,\psi)}_{\bV} < \beta.
\end{align*}
Then, for $\theta := \mu - \eps \in (0,\frac{1}{2})$, we have
\begin{align*}
\abs{\E(v,\psi) - \E(0,0)}^{1-\theta} \leq \norm{\M(v,\psi)}_{\bV'} \text{ for all } \norm{(v,\psi)}_{\bV} < \beta,
\end{align*}
which is exactly \eqref{LS}.

The proof of Theorem \ref{thm:LSa} is complete.

\section{Long-time Behaviour}\label{sec:Long}

First, we deduce the following result on the decay of time derivatives $(\partial_t u, \partial_t \phi)$:
\begin{prop}\label{lem:timederiv}
Let $(u,\phi)$ be the global strong solution to problem \eqref{ACAC}. It holds that
\begin{align*}
\lim_{t \to + \infty} \norm{(\pd_t u(t), \pd_t \phi(t))}_{\bH} = 0.
\end{align*}
\end{prop}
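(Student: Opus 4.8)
The plan is to upgrade the dissipation already recorded in the energy identity \eqref{E1} into pointwise decay of
\begin{align*}
g(t) := \norm{\pd_t u(t)}_{L^2(\Omega)}^2 + \norm{\pd_t \phi(t)}_{L^2(\Gamma)}^2 .
\end{align*}
From \eqref{Est1} we already know $g \in L^1(0,+\infty)$; moreover, by Theorem \ref{thm:Exist} we have $(\pd_t u, \pd_t \phi) \in L^2(0,+\infty; \bV)$, and the fourth a priori estimate yields $(\pd_{tt} u, \pd_{tt} \phi) \in L^2_{\mathrm{loc}}((0,+\infty); \bH)$, so $g$ is locally absolutely continuous on $(0,+\infty)$ and may be differentiated in time. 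It then suffices to establish a differential inequality $g' \leq Cg$.

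To get it, I would differentiate \eqref{u}, \eqref{phi} and the Robin condition \eqref{Rob} in time --- rigorously at the Faedo--Galerkin level, exploiting the smoothness in time of the Galerkin coefficients furnished by the analytic nonlinearities, exactly as in Section \ref{sec:Exist} --- and test the differentiated bulk equation with $\pd_t u$ and the differentiated surface equation with $\pd_t \phi$. This is the differential form of the second a priori estimate, now tracked pointwise in time. Using $\pd_t \pdnu u = K^{-1}(h'(\phi)\pd_t \phi - \pd_t u)$ on $\Gamma$ and integrating by parts, one obtains for a.e. $t>0$
\begin{align*}
\frac12 g'(t) &= -\norm{\nabla \pd_t u}_{L^2(\Omega)}^2 - \norm{\surf \pd_t \phi}_{L^2(\Gamma)}^2 - \frac1K \norm{\pd_t u}_{L^2(\Gamma)}^2 - \frac1K \int_\Gamma (h'(\phi))^2 \abs{\pd_t \phi}^2 \dH \\
&\quad + \frac2K \int_\Gamma h'(\phi) \pd_t \phi \, \pd_t u \dH - \int_\Omega f'(u) \abs{\pd_t u}^2 \dx - \int_\Gamma \left( f_\Gamma'(\phi) \abs{\pd_t \phi}^2 + h''(\phi) \abs{\pd_t \phi}^2 \pdnu u \right) \dH .
\end{align*}
Now $f'(u), f_\Gamma'(\phi) \geq -c_4$ by \eqref{ass:F}; $h', h'' \in L^\infty(\R)$ by \eqref{ass:h}; $\pdnu u = K^{-1}(h(\phi)-u)$ is bounded in $L^\infty(0,+\infty; L^2(\Gamma))$ by \eqref{Est1}; the two-dimensional Gagliardo--Nirenberg inequality \eqref{GN} controls $\int_\Gamma h''(\phi)\abs{\pd_t \phi}^2 \pdnu u \dH$ once a fraction of $\norm{\surf \pd_t \phi}_{L^2(\Gamma)}^2$ is absorbed; and Young's inequality absorbs the cross term $\frac2K \int_\Gamma h'(\phi)\pd_t\phi\,\pd_t u \dH$ into $\frac1K\norm{\pd_t u}_{L^2(\Gamma)}^2$ plus a constant multiple of $\norm{\pd_t\phi}_{L^2(\Gamma)}^2$. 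Discarding the remaining non-positive dissipative terms, one arrives at $g'(t) \leq C g(t)$ for a.e. $t>0$, with $C$ depending only on $K$, $c_4$, $\norm{h'}_{L^\infty(\R)}$, $\norm{h''}_{L^\infty(\R)}$, $\Omega$, $\Gamma$ and the uniform-in-time bounds of Theorem \ref{thm:Exist}.

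Finally I would invoke the elementary fact that a non-negative $g \in L^1(0,+\infty)$ with $g' \leq Cg$ a.e. tends to $0$ at infinity: the function $G(t) := g(t) - C\int_1^t g(s)\ds$ is non-increasing on $[1,+\infty)$ and bounded below by $-C\norm{g}_{L^1(0,+\infty)}$, hence converges; since $\int_1^t g$ converges too, $g(t)$ has a finite limit, which must be $0$ because $g \geq 0$ is integrable. This yields $\lim_{t\to+\infty}\norm{(\pd_t u(t),\pd_t\phi(t))}_{\bH}^2 = 0$, as claimed. The main obstacle is closing the differential inequality against the bulk--surface coupling and the nonlinear Robin relaxation, which generate the boundary contributions $K^{-1}h'(\phi)\pd_t\phi\,\pd_t u$ and $h''(\phi)\abs{\pd_t\phi}^2\pdnu u$ on $\Gamma$: the point is that the first is exactly balanced --- after Young's inequality --- by the dissipative boundary term $K^{-1}\norm{\pd_t u}_{L^2(\Gamma)}^2$ coming from the differentiated Robin condition, while the second is handled by trading surface regularity via \eqref{GN} against the available $L^2(\Gamma)$ bound on $\pdnu u$; a secondary, technical point is the rigorous justification of the time differentiation, inherited from the Galerkin scheme of Section \ref{sec:Exist}.
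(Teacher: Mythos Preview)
Your proof is correct and follows essentially the same route as the paper: differentiate the bulk and surface equations in time, test with $\pd_t u$ and $\pd_t \phi$, control the coupling terms via \eqref{ass:h}, \eqref{ass:F}, the uniform $L^2(\Gamma)$ bound on $\pdnu u$, and the Gagliardo--Nirenberg inequality \eqref{GN}, to arrive at the differential inequality $g'\leq Cg$ (the paper records this as \eqref{time:der}). The only cosmetic difference is in the final step: the paper further bounds $Cg\leq g^2+C$ and invokes \cite[Lemma~6.2.1]{Zheng}, whereas you supply a direct elementary argument (the monotonicity of $g(t)-C\int_1^t g$) which is perfectly valid and more self-contained; in fact the three boundary terms $K^{-1}\norm{\pd_t u}_{L^2(\Gamma)}^2 + K^{-1}\int_\Gamma (h'(\phi))^2|\pd_t\phi|^2 - 2K^{-1}\int_\Gamma h'(\phi)\pd_t\phi\,\pd_t u$ in your identity combine into the perfect square $K^{-1}\norm{\pd_t u - h'(\phi)\pd_t\phi}_{L^2(\Gamma)}^2\geq 0$, which would let you skip the Young step for the cross term altogether.
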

\begin{proof}
We take the time derivative of \eqref{u} and test with $\pd_t u$, leading to
\begin{align*}
& \frac{1}{2} \frac{d}{dt} \norm{\pd_t u}_{L^2(\Omega)}^2 + \norm{\nabla \pd_t u}_{L^2(\Omega)}^2 + K^{-1} \norm{\pd_t u}_{L^2(\Gamma)}^2 \\
& \quad  = -\int_\Omega f'(u)\abs{\pd_t u}^2 \dx + \int_\Gamma K^{-1} \pd_t h(\phi) \pd_t u \dH \\
& \quad \leq c_4 \norm{\pd_t u}_{L^2(\Omega)}^2 + \frac{1}{2K} \norm{\pd_t u}_{L^2(\Gamma)}^2 + \frac{1}{2K}  \norm{\pd_t h(\phi)}_{L^2(\Gamma)}^2.
\end{align*}
Similarly, taking the time derivative of \eqref{phi} and testing with $\pd_t \phi$ leads to
\begin{align*}
& \frac{1}{2} \frac{d}{dt} \norm{\pd_t \phi}_{L^2(\Gamma)}^2 + \norm{\surf \pd_t \phi}_{L^2(\Gamma)}^2 + K^{-1} \norm{\pd_t h(\phi)}_{L^2(\Gamma)}^2 \\
& \quad = - \int_\Gamma f_\Gamma'(\phi) \abs{\pd_t \phi}^2 + h''(\phi) \abs{\pd_t \phi}^2 \pdnu u - K^{-1}\pd_t h(\phi) \pd_t u \dH \\
& \quad \leq c_4 \norm{\pd_t \phi}_{L^2(\Gamma)}^2 + \frac{1}{2K} \norm{\pd_t h(\phi)}_{L^2(\Gamma)}^2 + \frac{1}{2K} \norm{\pd_t u}_{L^2(\Gamma)}^2 \\
& \qquad + \norm{h''}_{L^{\infty}(\R)} \norm{\pdnu u}_{L^2(\Gamma)} \norm{\pd_t \phi}_{L^4(\Gamma)}^2.
\end{align*}
Adding these two inequalities yields
\begin{align*}
& \frac{1}{2} \frac{d}{dt} \norm{(\pd_t u, \pd_t \phi)}_{\bH}^2 + \norm{(\nabla \pd_t u, \surf \pd_t \phi)}_{\bH}^2 \\
& \quad \leq c_4 \norm{(\pd_t u, \pd_t \phi)}_{\bH}^2 + C \norm{h''}_{L^{\infty}(\R)} \norm{\pdnu u}_{L^2(\Gamma)} \Big ( \eps \norm{\surf \pd_t \phi}_{L^2(\Gamma)}^2 + C_\eps \norm{\pd_t \phi}_{L^2(\Gamma)}^2 \Big ),
\end{align*}
where we employed the Gagliardo--Nirenburg inequality \eqref{GN} and Young's inequality.  Choosing $\eps$ sufficiently small, we arrive at
\begin{align}\label{time:der}
\frac{d}{dt} \norm{(\pd_t u, \pd_t \phi)}_{\bH}^2 + \norm{(\nabla \pd_t u, \surf \pd_t \phi)}_{\bH}^2 &\leq C\norm{(\pd_t u, \pd_t \phi)}_{\bH}^2\nonumber\\
&\leq  \norm{(\pd_t u, \pd_t \phi)}_{\bH}^4 + C.
\end{align}
Invoking \cite[Lemma 6.2.1]{Zheng} and using the fact that $\norm{(\pd_t u, \pd_t \phi)}_{L^{2}(0,+\infty;\bH)} < \infty$ (recall \eqref{Est1}), we deduce the desired assertion.
\end{proof}

Next, thanks to Theorem \ref{thm:Exist}, for any initial data $(u_0,\phi_0) \in \bW$ satisfying \eqref{ass:ini}, the unique global solution $(u,\phi)$ to problem \eqref{ACAC} allows us to define the $\omega$-limit set $\omega(u_0, \phi_0)$ as
\begin{equation}\label{omega}
\begin{aligned}
\omega(u_0, \phi_0) = \Big \{ (U,\Phi) : \exists \{ t_k \}_{k \in \N}, \, t_k \nearrow +\infty \text{ s.t. } (u(t_k), \phi(t_k)) \to (U,\Phi) \text{ in } \bW \Big \}.
\end{aligned}
\end{equation}
Then by Theorem \ref{thm:Exist} and the Lyapunov structure \eqref{Lyap} of problem \eqref{ACAC}, it is standard to conclude the following result:
\begin{lem}\label{lem:omega}
Under assumptions \eqref{ass:dom}--\eqref{ass:F}, for any $(u_0, \phi_0) \in \bW$ satisfying \eqref{ass:ini}, the set $\omega(u_0,\phi_0)$ is a non-empty compact subset in $\bW$.
Furthermore, $\omega(u_0, \phi_0)$ consists of critical points of the energy functional $E$, which is constant on $\omega(u_0, \phi_0)$.
\end{lem}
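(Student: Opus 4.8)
The plan is to read off all three assertions from ingredients already at our disposal: the uniform-in-time $H^3$-regularity in Theorem \ref{thm:Exist}, the energy identity \eqref{E1}, and the decay of the time derivatives in Proposition \ref{lem:timederiv}. \emph{Non-emptiness and compactness:} By Theorem \ref{thm:Exist}, the trajectory $\{(u(t),\phi(t)) : t \geq 1\}$ is bounded in $H^3(\Omega)\times H^3(\Gamma)$, which embeds compactly into $\bW$; hence it is relatively compact in $\bW$. Since moreover $(u,\phi)\in C([0,+\infty);\bW)$, standard arguments show that the set $\omega(u_0,\phi_0)$ defined in \eqref{omega} is non-empty, compact (and connected) in $\bW$.

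\emph{$E$ is constant on $\omega(u_0,\phi_0)$:} The identity \eqref{E1} shows that $t\mapsto E(u(t),\phi(t))$ is non-increasing, while assumption \eqref{ass:F} together with the non-negativity of the remaining terms in \eqref{energy} shows that $E$ is bounded from below along the trajectory. Thus $E(u(t),\phi(t))$ converges to some finite $\E_\infty$ as $t\to+\infty$. If $(U,\Phi)\in\omega(u_0,\phi_0)$, choose $t_k\nearrow+\infty$ with $(u(t_k),\phi(t_k))\to(U,\Phi)$ in $\bW\hookrightarrow\bV$; since $E$ is continuous on $\bV$, we obtain $E(U,\Phi)=\lim_k E(u(t_k),\phi(t_k))=\E_\infty$, so $E\equiv\E_\infty$ on $\omega(u_0,\phi_0)$.

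\emph{Elements of $\omega(u_0,\phi_0)$ are critical points:} Testing \eqref{u} with $w\in H^1(\Omega)$, \eqref{phi} with $\xi\in H^1(\Gamma)$, integrating by parts, and using \eqref{Rob} to substitute $\pdnu u=K^{-1}(h(\phi)-u)$ in both the bulk boundary term and the surface coupling term, one obtains for all $(w,\xi)\in\bV$
\begin{align*}
\inner{M(u(t),\phi(t))}{(w,\xi)}_{\bV',\bV} = -\int_\Omega \pd_t u(t)\,w \dx - \int_\Gamma \pd_t\phi(t)\,\xi \dH,
\end{align*}
with $M$ as in \eqref{defn:MM}. Taking the supremum over $\norm{(w,\xi)}_{\bV}\leq 1$ and using $\bV\hookrightarrow\bH$ gives $\norm{M(u(t),\phi(t))}_{\bV'}\leq C\norm{(\pd_t u(t),\pd_t\phi(t))}_{\bH}$, which tends to $0$ as $t\to+\infty$ by Proposition \ref{lem:timederiv}. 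Since $E$ is continuously Fr\'echet differentiable on $\bV$, the map $M:\bV\to\bV'$ is continuous, so $M(u(t_k),\phi(t_k))\to M(U,\Phi)$ in $\bV'$; combined with the previous limit this forces $M(U,\Phi)=0$, i.e.\ $(U,\Phi)$ is a critical point of $E$. By Proposition \ref{thm:stat}, $(U,\Phi)\in\bW$ is then a strong solution of the stationary problem \eqref{stat}.

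All three steps are essentially routine. The one computation requiring some care is the weak identity in the third step, whose derivation rests on correctly eliminating $\pdnu u$ in the two boundary integrals; and the genuinely non-trivial input, the decay $\norm{(\pd_t u,\pd_t\phi)}_{\bH}\to 0$, has already been secured in Proposition \ref{lem:timederiv}. I therefore do not expect any real obstacle in this lemma—it serves mainly to package the preceding results into the form needed for the convergence proof in Theorem \ref{thm:Eqm}.
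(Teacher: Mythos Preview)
Your proof is correct. The paper itself gives no proof of this lemma, merely stating that ``by Theorem~\ref{thm:Exist} and the Lyapunov structure~\eqref{Lyap} of problem~\eqref{ACAC}, it is standard to conclude'' the result; you have supplied a valid and complete filling-in of those details, drawing in addition on Proposition~\ref{lem:timederiv} (which is proved just before this lemma and is therefore available) to handle the critical-point assertion directly rather than via a LaSalle invariance argument.
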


In the remaining part of this section, we prove Theorem \ref{thm:Eqm}, namely, the set $\omega(u_0, \phi_0)$ is indeed a singleton and moreover, an estimate on the convergence rate can be obtained.

\subsection{Convergence to equilibrium}

By definition of $\omega(u_0, \phi_0)$ and Lemma \ref{omega}, there exists an element $(u_*, \phi_*)\in \omega(u_0, \phi_0)$ and a sequence $t_k\nearrow +\infty$ such that
\begin{align}\label{Eqm:tk}
\lim_{t_k \to +\infty} \norm{(u(t_k), \phi(t_k)) - (u_*, \phi_*)}_{\bW} = 0.
\end{align}
Then we prove
\begin{align}\label{Eqm}
\lim_{t \to +\infty} \norm{(u(t), \phi(t)) - (u_*, \phi_*)}_{\bW} = 0.
\end{align}

\paragraph{Case 1.} Suppose there is a $t_* > 0$ such that $E(u(t_*), \phi(t_*)) = E(u_*,\phi_*):=E_\infty$.  Then, by the non-increasing property of $E(u(t), \phi(t))$ with respect to $t$, it holds that $E(u(t), \phi(t)) = E_\infty$ for all $t \geq t_*$.  In particular, by the energy identity \eqref{E1} it holds that
\begin{align*}
\int_{t_*}^t \norm{\pd_t u(s)}_{L^2(\Omega)}^2 + \norm{\pd_t \phi(s)}_{L^2(\Gamma)}^2 \ds = 0, \quad \forall\, t\geq t_*,
\end{align*}
and so $(u,\phi)$ is independent of time after $t_*$.  Employing this fact together with \eqref{Eqm:tk} leads to the desired convergence \eqref{Eqm}.

\paragraph{Case 2.} Suppose that $E(u(t), \phi(t)) >E_\infty$ for all $t\geq 0$.
For strong solution $(u,\phi)$ to problem \eqref{ACAC} we obtain from \eqref{defn:MM} that
\begin{align*}
(M(u,\phi), (w,\xi))_{\bH} & = \int_\Omega (-\Lap u + f(u)) w \dx + \int_\Gamma (\pdnu u + K^{-1}(u - h(\phi)) ) w \dH \\
& \quad + \int_\Gamma ( - \LB \phi + f_\Gamma(\phi) + K^{-1} h'(\phi) (h(\phi) - u) ) \xi \dH \\
& = \int_\Omega (-\Lap u + f(u)) w \dx + \int_\Gamma ( - \LB \phi + f_\Gamma(\phi) + h'(\phi)\pdnu u ) \xi \dH\\
& = \int_\Omega (\partial_t u) w \dx +\int_\Gamma (\partial_t \phi) \xi \dH.
\end{align*}
Let $\beta$ be the constant in Theorem \ref{thm:LSa} associated with $(u_*, \phi_*)$. For any $t\geq 0$, if the strong solution $(u(t),\phi(t))$ satisfies $\norm{(u(t), \phi(t)) - (u_*, \phi_*)}_{\bV} < \beta$, then we deduce from Theorem \ref{thm:LSa} that
\begin{align}\label{LSc}
C\Big ( \norm{\pd_t u(t)}_{L^2(\Omega)} + \norm{\pd_t \phi(t)}_{L^2(\Gamma)} \Big) \geq |E(u(t),\phi(t))-E_\infty|^{1-\theta}.
\end{align}
Using the above fact and the basic energy law \eqref{Lyap}, one can argue in the exact same manner as in \cite{J} (see also \cite[Section 4.1]{SW}) to conclude that there exists a $t_* > 0$ such that for all $t \geq t_*$, $\norm{(u(t), \phi(t)) - (u_*, \phi_*)}_{\bV} < \beta$, and then \eqref{LSc} holds for all $t\geq t_*$.
As a consequence,
\begin{align*}
-\frac{d}{dt} (E(u(t), \phi(t)) - E_\infty)^{\theta} & = -\theta (E(u(t), \phi(t)) - E_\infty)^{\theta-1} \frac{d}{dt} E(u(t), \phi(t)) \\
& = \theta (E(u(t),\phi(t)) - E_\infty)^{\theta-1} \norm{(\pd_t u(t), \pd_t \phi(t))}_{\bH}^2 \\
& \geq  C\theta \Big ( \norm{\pd_t u(t)}_{L^2(\Omega)} + \norm{\pd_t \phi(t)}_{L^2(\Gamma)} \Big)^{-1} \norm{(\pd_t u(t), \pd_t \phi(t))}_{\bH}^2\\
& \geq  C \Big ( \norm{\pd_t u(t)}_{L^2(\Omega)} + \norm{\pd_t \phi(t)}_{L^2(\Gamma)} \Big),
\end{align*}
where we have used the easy fact $\sqrt{a^2 + b^2} \geq \frac{1}{\sqrt{2}} (a+b)$. Therefore,
\begin{align}\label{Eqm:ODE}
\frac{d}{dt} (E(u(t), \phi(t)) - E_\infty)^{\theta} + C_*\Big ( \norm{\pd_t u(t)}_{L^2(\Omega)} + \norm{\pd_t \phi(t)}_{L^2(\Gamma)} \Big) \leq 0,
\end{align}
for $t\geq t_*$.
Integrating with respect to time yields that
\begin{align*}
\int_{t_*}^{+\infty} \Big ( \norm{\pd_t u(t)}_{L^2(\Omega)} + \norm{\pd_t \phi(t)}_{L^2(\Gamma)} \Big) \dt  < +\infty,
\end{align*}
which together with \eqref{Eqm:tk} implies that $(u(t),\phi(t))$ converges to $(u_*, \phi_*)$ in $\bH$ as $t \to +\infty$.
Thanks to the fact that $(u,\phi) \in L^{\infty}(\delta, +\infty; H^3(\Omega) \times H^3(\Gamma))$, then by compactness we can deduce the convergence \eqref{Eqm}.

\subsection{Convergence rates}
In the second step, we derive estimates on the rate of convergence.

\paragraph{Estimates in $\bH$.} The above analysis asserts that for $t \geq t_*$ the solution $(u(t), \phi(t))$ enters a neighbourhood of a particular equilibrium $(u_*, \phi_*)$ and remains there, namely \eqref{LSc} holds. Hence, from \eqref{Lyap} we obtain for $t \geq t_*$
\begin{align*}
& \frac{d}{dt} (E(u(t), \phi(t)) - E_\infty) + C (E(u(t), \phi(t)) - E_\infty)^{2(1-\theta)} \leq  0,
\end{align*}
which yields (see \cite[Lemma 2.6]{HJ01})
\begin{equation}\label{rate1}
\begin{aligned}
E(u(t), \phi(t)) - E_\infty \leq C (1+t)^{\frac{-1}{1-2 \theta}}\quad \forall\, t\geq t_*.
\end{aligned}
\end{equation}
Then, we infer from \eqref{Eqm:ODE} that
\begin{align*}
& \norm{u(t)-u_*}_{L^2(\Omega)}+\norm{\phi(t)-\phi_*}_{L^2(\Gamma)}\\
 &\quad \leq \int_t^{\infty} \Big ( \norm{\pd_t u(s)}_{L^2(\Omega)} + \norm{\pd_t \phi(s)}_{L^2(\Gamma)} \Big) \ds\\
 &\quad \leq C (1+t)^{\frac{-\theta}{1-2 \theta}}\quad  \forall\, t \geq t_*.
\end{align*}
Together with the uniform estimate of $\|(u, \phi)\|_{\bV}$, we can conclude
\begin{align}\label{H:rate}
\norm{(u(t), \phi(t)) - (u_*, \phi_*)}_{\bH} \leq C (1+t)^{\frac{-\theta}{1-2 \theta}}\quad \forall\, t \geq 0.
\end{align}

\paragraph{Estimates in $\bV$.} The higher-order estimate turns out to be more involved. Let $\hat u := u - u_*$, $\hat \phi := \phi - \phi_*$ with
\begin{align*}
\hat f := f(u) - f(u_*), \quad \hat f_\Gamma := f_\Gamma(\phi) - f_\Gamma(\phi_*), \quad \hat h := h(\phi) - h(\phi_*), \quad \hat{h}' := h'(\phi) - h'(\phi_*).
\end{align*}
Then, subtracting the stationary problem \eqref{stat} from the evolution equations \eqref{ACAC} yields
\begin{subequations}\label{sys:rate}
\begin{alignat}{2}
\pd_t  \hat u - \Lap \hat u + \hat f = 0 &\quad  \text{ a.e. in } \Omega, \label{r:u} \\
K \pdnu \hat u + \hat u = \hat h &\quad  \text{ a.e. on } \Gamma, \label{r:pdnu}  \\
\pd_t \hat \phi - \LB \hat \phi + \hat f_\Gamma + \hat{h}' \pdnu u + h'(\phi_*) \pdnu \hat u = 0 &\quad  \text{ a.e. on } \Gamma. \label{r:phi}
\end{alignat}
\end{subequations}
Testing \eqref{r:u} with $\hat u$ and \eqref{r:phi} with $\hat \phi$ leads to after summing
\begin{equation}\label{Rate1}
\begin{aligned}
& \frac{1}{2} \frac{d}{dt} \norm{(\hat u, \hat \phi)}_{\bH}^2 + \norm{(\nabla \hat u, \surf \hat \phi)}_{\bH}^2 + K^{-1} \norm{\hat u}_{L^2(\Gamma)}^2 \\
& \quad = -\int_\Omega \hat f \hat u \dx - \int_\Gamma \hat f_\Gamma \hat \phi + K^{-1} \Big (h'(\phi_*) (\hat h - \hat u) \hat \phi + (h(\phi) - u) \hat{h}' \hat \phi -  \hat h \hat u \Big ) \dH.
\end{aligned}
\end{equation}
From the fact $\pd_t \hat u = \pd_t u$ we see
\begin{align*}
\int_\Omega (f(u) - f(u_*))\pd_t \hat u \dx =  \frac{d}{dt} \int_\Omega F(u) - F(u_*) - f(u_*)(u - u_*) \dx,
\end{align*}
and so, testing \eqref{r:u} with $\pd_t \hat u$ and \eqref{r:phi} with $\pd_t \hat \phi$ leads to after summing
\begin{equation}\label{Rate2}
\begin{aligned}
& \frac{d}{dt} \Big ( \int_\Omega F(u) - F(u_*) - f(u_*) \hat u \dx + \int_\Gamma F_\Gamma(\phi) - F_\Gamma(\phi_*) - f(\phi_*) \hat \phi \dH \Big ) \\
&\qquad + \frac{1}{2}  \frac{d}{dt} \Big (\norm{(\nabla \hat u, \surf \hat \phi)}_{\bH}^2  + K^{-1} \norm{\hat u}_{L^2(\Gamma)}^2 \Big ) + \norm{(\pd_t \hat u, \pd_t \hat \phi)}_{\bH}^2 \\
& \quad = \int_\Gamma K^{-1} \Big ( \hat h \pd_t \hat u - h'(\phi_*) (\hat h - \hat u) \pd_t \hat \phi - (h(\phi) - u) \hat{h}' \pd_t \hat \phi \Big ) \dH \\
& \quad =  \int_\Gamma K^{-1} \Big ( (\hat h  - h'(\phi_*) \hat \phi) \pd_t \hat u  - (h'(\phi_*) \hat h + (h(\phi) - u) \hat{h}') \pd_t \hat \phi \Big ) \dH\\
& \qquad + \frac{1}{K} \frac{d}{dt} \int_\Gamma h'(\phi_*) \hat u \hat \phi \dH.
\end{aligned}
\end{equation}
Let
\begin{equation}\label{defn:y}
\begin{aligned}
y(t) & := \frac{1}{2} \norm{(\hat u(t), \hat \phi(t))}_{\bV}^2 + \frac{1}{2K} \norm{\hat u(t)}_{L^2(\Gamma)}^2  + \int_\Omega F(u(t)) - F(u_*) - f(u_*)\hat u(t) \dx \\
& \quad + \int_\Gamma F_\Gamma(\phi(t)) - F_\Gamma(\phi_*) - f_\Gamma(\phi_*) \hat \phi(t) - \frac{1}{K} h'(\phi_*) \hat u(t) \hat \phi(t)\dH,
\end{aligned}
\end{equation}
so that upon adding \eqref{Rate1} and \eqref{Rate2} we have
\begin{equation}\label{Rate3}
\begin{aligned}
& \frac{d}{dt} y(t) + \norm{(\pd_t \hat u, \pd_t \hat \phi)}_{\bH}^2 + \norm{(\nabla \hat u, \surf \hat \phi)}_{\bH}^2 + K^{-1} \norm{\hat u}_{L^2(\Gamma)}^2 \\
& \quad = -\int_\Omega \hat f \hat u \dx - \int_\Gamma \hat f_\Gamma \hat \phi + K^{-1} \Big (h'(\phi_*) (\hat h - \hat u) \hat \phi + (h(\phi) - u) \hat{h}' \hat \phi -  \hat h \hat u \Big ) \dH \\
& \qquad + \int_\Gamma K^{-1}\Big ( (\hat h  - h'(\phi_*) \hat \phi) \pd_t \hat u  - (h'(\phi_*) \hat h + (h(\phi) - u) \hat{h}') \pd_t \hat \phi \Big )  \dH.
\end{aligned}
\end{equation}
Since $u\in L^\infty(0,+\infty; H^2(\Omega))$, $u_*\in H^2(\Omega)$, then, by the fundamental theorem of calculus we infer
\begin{equation}\label{f:diff}
\begin{aligned}
\norm{f(u) - f(u_*)}_{L^2(\Omega)}^2 & = \int_\Omega \abs{\int_0^1 f'(u + s(u_* - u)) (u_* - u) \ds}^2 \dx\\
&  \leq C \norm{u-u_*}_{L^2(\Omega)}^2,
\end{aligned}
\end{equation}
and in a similar fashion,
\begin{align*}
\norm{f_\Gamma(\phi) - f_\Gamma(\phi_*)}_{L^2(\Gamma)} \leq C \norm{\phi-\phi_*}_{L^2(\Gamma)}.
\end{align*}
Then we have
\begin{align*}
\abs{\int_\Omega \hat f \hat u \dx} \leq C \norm{\hat u}_{L^2(\Omega)}^2, \quad \abs{\int_\Gamma \hat f_\Gamma \hat \phi \dH } \leq C \norm{\hat \phi}_{L^2(\Gamma)}^2.
\end{align*}
  Moreover, by \eqref{ass:h} and the fact that $h(\phi) - u \in L^{\infty}(0,+\infty;L^{\infty}(\Gamma))$, we obtain
\begin{align*}
K^{-1}\abs{\int_\Gamma h'(\phi_*) (\hat h - \hat u) \hat \phi + (h(\phi) - u) \hat{h}' \hat \phi - \hat h \hat u \dH} \leq \frac{1}{2K} \norm{\hat u}_{L^2(\Gamma)}^2 + C \norm{\hat \phi}_{L^2(\Gamma)}^2
\end{align*}
and for some $\zeta > 0$
\begin{align*}
& K^{-1} \abs{\int_\Gamma (\hat h  - h'(\phi_*) \hat \phi) \pd_t \hat u  - (h'(\phi_*) \hat h + (h(\phi) - u) \hat{h}') \pd_t \hat \phi \dH } \\
& \quad \leq \zeta \norm{\pd_t \hat u}_{L^2(\Gamma)}^2 + \frac{1}{2} \norm{\pd_t \hat \phi}_{L^2(\Gamma)}^2 + C \norm{\hat \phi}_{L^2(\Gamma)}^2.
\end{align*}
Substituting these into \eqref{Rate3} yields
\begin{equation}\label{Rate4}
\begin{aligned}
& \frac{d}{dt} y(t) + \frac{1}{2} \norm{(\pd_t \hat u, \pd_t \hat \phi)}_{\bH}^2 + \norm{(\nabla \hat u, \surf \hat \phi)}_{\bH}^2 + \frac{1}{2K} \norm{\hat u}_{L^2(\Gamma)}^2 \\
& \quad \leq C \norm{(\hat u, \hat \phi)}_{\bH}^2 + \zeta \norm{\pd_t \hat u}_{L^2(\Gamma)}^2.
\end{aligned}
\end{equation}
Recalling the estimate \eqref{time:der}, which reads in our present setting as
\begin{align}\label{Rate5}
\frac{d}{dt} \norm{(\pd_t \hat u, \pd_t \hat \phi)}_{\bH}^2 + \norm{(\nabla \pd_t \hat u, \surf \pd_t \hat \phi)}_{\bH}^2 \leq C_1 \norm{(\pd_t \hat u, \pd_t \hat \phi)}_{\bH}^2,
\end{align}
for some positive constant $C_1$.  Choose a constant $\eta > 0$ such that $\kappa := \frac{1}{2} - C_1 \eta > 0$, then multiplying \eqref{Rate5} with a constant $\eta > 0$ and adding the result to \eqref{Rate4} leads to
\begin{align*}
& \frac{d}{dt} \Big ( \eta \norm{\pd_t \hat u, \pd_t \hat \phi)}_{\bH}^2 + y(t) \Big ) + \min(\kappa, \eta) \norm{(\pd_t \hat u, \pd_t \hat \phi)}_{\bV}^2 + \norm{(\nabla \hat u, \surf \hat \phi)}_{\bH}^2 + \frac{1}{2K} \norm{\hat u}_{L^2(\Gamma)}^2 \\
& \quad \leq C \norm{(\hat u, \hat \phi)}_{\bH}^2 + \zeta \norm{\pd_t \hat u}_{L^2(\Gamma)}^2.
\end{align*}
By choosing $\zeta$ such that $\zeta C_{\mathrm{tr}} < \frac{1}{2} \min(\kappa, \eta)$, where $C_{\mathrm{tr}}$ is the constant from the trace theorem, we can absorb the term $\norm{\pd_t \hat u}_{L^2(\Gamma)}^2$ on the right-hand side by the term $\norm{(\pd_t \hat u, \pd_t \hat \phi)}_{\bV}^2$ on the left-hand side, which leads to
\begin{equation}\label{Rate6}
\begin{aligned}
& \frac{d}{dt} \Big ( \eta \norm{(\pd_t \hat u, \pd_t \hat \phi)}_{\bH}^2 + y(t) \Big ) + c_{\kappa,\eta} \norm{(\pd_t \hat u, \pd_t \hat \phi)}_{\bV}^2 + \norm{(\nabla \hat u, \surf \hat \phi)}_{\bH}^2  + \frac{1}{2K} \norm{\hat u}_{L^2(\Gamma)}^2 \\
& \quad \leq C \norm{(\hat u, \hat \phi)}_{\bH}^2.
\end{aligned}
\end{equation}
Meanwhile, we observe by the Newton--Leibniz formula and the fact $u, u_* \in L^{\infty}(0,+\infty;L^{\infty}(\Omega))$ that
\begin{align*}
\abs{ \int_\Omega F(u) - F(u_*) - f(u_*) \hat u \dx} & = \abs{\int_\Omega \int_0^1 \int_0^1 f'(sz u + (1-sz) u_*) (\hat u)^2 \ds \dz \dx} \\
& \leq C \norm{\hat u}_{L^2(\Omega)}^2,
\end{align*}
and a similar estimate holds for the term involving $F_\Gamma$.   So, from the definition \eqref{defn:y} of $y(t)$ we infer that
\begin{align}\label{y:lb}
y(t) \geq \frac{1}{2} \norm{(\hat u(t), \hat \phi(t))}_{\bV}^2 - C \norm{(\hat u(t), \hat \phi(t))}_{\bH}^2.
\end{align}
On the other hand, we also have
\begin{align}\label{y:up}
\norm{(\nabla \hat u(t), \surf \hat \phi(t))}_{\bH}^2 + \frac{1}{2K} \norm{\hat u(t)}_{L^2(\Gamma)}^2 & \geq y(t) - C \norm{(\hat u(t), \hat \phi(t))}_{\bH}^2.
\end{align}
Substituting  \eqref{H:rate}, \eqref{y:up} into \eqref{Rate6}, there exists a constant $\gamma > 0$ such that
\begin{equation}\label{Rate7}
\begin{aligned}
& \frac{d}{dt} Y(t) + \gamma Y(t)\leq C \norm{(\hat u(t), \hat \phi(t))}_{\bH}^2 \leq C (1+t)^{\frac{-2 \theta}{1-2\theta}},
\end{aligned}
\end{equation}
 where  $Y(t) = \eta \norm{(\pd_t \hat u(t), \pd_t \hat \phi(t))}_{\bH}^2 + y(t)$.  As a result, we can deduce that (cf. \cite{Wu})
\begin{align*}
Y(t) \leq C (1+t)^{\frac{-2 \theta}{1-2\theta}} \quad\forall\, t \geq 0,
\end{align*}
so that by \eqref{y:up} we obtain
\begin{align}\label{V:rate}
\norm{(\hat u(t), \hat \phi(t))}_{\bV}^2 + \norm{(\pd_t \hat u(t), \pd_t \hat \phi(t))}_{\bH}^2 \leq C (1+t)^{\frac{-2 \theta}{1-2\theta}} \quad\forall\, t \geq 0.
\end{align}

\paragraph{Estimates in $\bW$.} To deduce the convergence rate in the $\bW$-norm, we apply elliptic regularity estimates for the system \eqref{sys:rate}, whilst employing \eqref{ass:h}, \eqref{f:diff} and \eqref{V:rate} to obtain
\begin{align*}
&\norm{\hat \phi(t)}_{H^2(\Gamma)} \\
&\quad \leq C \norm{(\pd_t \hat \phi + \hat f_\Gamma + K^{-1} (\hat{h}' (h(\phi)- u) + h'(\phi_*) (\hat h - \hat u)))(t))}_{L^2(\Gamma)} + C \norm{\hat \phi(t)}_{H^1(\Gamma)}, \\
&\quad  \leq C (1+t)^{\frac{-\theta}{1-2 \theta}} \quad\forall\, t \geq 0, \\
&\norm{\hat u(t)}_{H^2(\Omega)}\\
&\quad \leq C \norm{\pd_t \hat u(t) + \hat f(t)}_{L^2(\Omega)} + C\norm{\hat u(t)}_{H^1(\Omega)} + C\norm{K^{-1}(\hat h(t) - \hat u(t))}_{H^{\frac{1}{2}}(\Gamma)}\\
&\quad \leq C (1+t)^{\frac{- \theta}{1-2 \theta}} + C\norm{\hat h(t)}_{H^1(\Gamma)} + C\norm{\hat u(t)}_{H^1(\Omega)} \\
&\quad \leq C (1+t)^{\frac{-\theta}{1-2\theta}}  \quad\forall\, t \geq 0.
\end{align*}
The proof of Theorem \ref{thm:Eqm} is complete.

\section*{Acknowledgements}
K.F. Lam expresses his gratitude to School of Mathematical Sciences at Fudan University for the hospitality during his visit in which part of this research was completed, and gratefully acknowledges the support from a Direct Grant of CUHK (project 4053288). H. Wu is partially supported by NNSFC grant No.~11631011 and the Shanghai Center for Mathematical Sciences at Fudan University.

\end{document}